\newif\ifdraft\draftfalse
\def\@begintheorem#1#2[#3]{%
    \def\naam{#1}
  \deferred@thm@head{\the\thm@headfont \thm@indent
    \@ifempty{#1}{\let\thmname\@gobble}{\let\thmname\@iden}%
    \@ifempty{#2}{\let\thmnumber\@gobble}{\let\thmnumber\@iden}%
    \@ifempty{#3}{\let\thmnote\@gobble}{\let\thmnote\@iden}%
    \thm@swap\swappedhead\thmhead{#1}{#2}{#3}%
    \the\thm@headpunct
    \thmheadnl 
    \hskip\thm@headsep
  }%
  \ignorespaces}
\newcommand{\kantlijndraft}[1]{\ifdraft\hspace{-{-1}stskip}%
\vadjust{\vspace{-1mm}\smash{\llap{{\tt #1}\hspace{8mm}}}\vspace{1mm}}\fi}
\def\voegToe#1#2#3{\immediate\write1{\string\newlabel{#1}{{#2}{#3}}}}
\newcommand{\thlabel}[1]{\voegToe{#1}{\naam\noexpand~\thetheorem}{\thepage}\kantlijndraft{#1}}
\renewcommand{\label}[1]{\voegToe{#1}{\@currentlabel}{\thepage}\kantlijndraft{#1}}
\newtheorem{theorem}{Theorem}[section]
\newtheorem{lemma}[theorem]{Lemma}
\newtheorem{corollary}[theorem]{Corollary}
\newtheorem{question}[theorem]{Question}
\theoremstyle{definition}
\newtheorem{example}[theorem]{Example}
\theoremstyle{remark}
\numberwithin{equation}{section}
\newtheorem{claim2}{\sc Claim}
\newcommand{\pichar}[1]{\ensuremath{\pi\chi(#1)}}
\newcommand{\sse}{\subseteq}						
\newcommand{\minus}{\backslash}						
\newcommand{\Un}{\bigcup}							
\newcommand{\un}{\cup}								
\newcommand{\Meet}{\bigcap}							
\newcommand{\meet}{\cap}							
\newcommand{\es}{\varnothing}						
\newcommand{\scr}[1]{\ensuremath{\mathcal{#1}}}
\newcommand{{\la}}{\lambda}
\def\cprime{$'$}
\def\sapirovskii{{\v{S}}apirovski{\u\i}}
\def\arhangelskii{Arhangel{\cprime}ski{\u\i}}
\begin{document}

\title{On cardinality bounds involving the weak Lindel\"of degree}

\author{A. Bella}\address{Department of Mathematics, University of Catania, Viale A. Doria 6, 95125 Catania, Italy}
\email{bella@dmi.unict.it}
\author{N. Carlson}\address{Department of Mathematics, California Lutheran University, 60 W. Olsen Rd, MC 3750, Thousand Oaks, CA 91360 USA}
\email{ncarlson@callutheran.edu}

\subjclass[2010]{54D20, 54D45, 54A25, 54E99.}

\keywords{cardinality bounds, cardinal invariants, locally compact, homogeneous}

\thanks{The research that led to the present paper was partially
supported by
a grant of the
group GNSAGA of INdAM}

\begin{abstract} 
We give a general closing-off argument in Theorem~\ref{thm1} from which several corollaries follow, including (1) if $X$ is a locally compact Hausdorff space then $|X|\leq 2^{wL(X)\psi(X)}$, and (2) if $X$ is a locally compact power homogeneous Hausdorff space then $|X|\leq 2^{wL(X)t(X)}$. The first extends the well-known cardinality bound $2^{\psi(X)}$ for a compactum $X$ in a new direction. As $|X|\leq 2^{wL(X)\chi(X)}$ for a normal space $X$~\cite{BGW1978}, this enlarges the class of known Tychonoff spaces for which this bound holds. In~\ref{thm2} we give a short, direct proof of (1) that does not use~\ref{thm1}. Yet~\ref{thm1} is broad enough to establish results much more general than (1), such as  if $X$ is a regular space with a $\pi$-base $\scr{B}$ such that $|B|\leq 2^{wL(X)\chi(X)}$ for all $B\in\scr{B}$, then $|X|\leq 2^{wL(X)\chi(X)}$. 

Separately, it is shown that if $X$ is a regular space with a $\pi$-base whose elements have compact closure, then $|X|\leq 2^{wL(X)\psi(X)t(X)}$. This partially answers a question from \cite{BGW1978} and gives a third, separate proof of (1). We also show that if $X$ is a weakly Lindel\"of, normal, sequential space with $\chi(X)\leq 2^{\aleph_0}$, then $|X|\leq 2^{\aleph_0}$.

Result (2) above is a new generalization of the cardinality bound $2^{t(X)}$ for a power homogeneous compactum $X$ (Arhangel'skii, van Mill, and Ridderbos \cite{avr2007}, De la Vega in the homogeneous case~\cite{dlv2006}). To this end we show that if $U\sse clD\sse X$, where $X$ is power homogeneous and $U$ is open, then $|U|\leq |D|^{\pi_{\chi}(X)}$. This is a strengthening of a result of Ridderbos~\cite{rid2006}. 

\end{abstract}

\maketitle


\section{Introduction.} 
The \emph{weak Lindel\"of degree} $wL(X)$ of a space $X$ is the least cardinal $\kappa$ such that for every open cover $\scr{U}$ of $X$ there exists $\scr{V}\in[\scr{U}]^{\leq\kappa}$ such that $X=cl(\Un\scr{V})$. In 1978 Bell, Ginsburg, and Woods~\cite{BGW1978} gave an example of a Hausdorff space $X$ for which $|X| > 2^{wL(X)\chi(X)}$ and showed that if $X$ is normal then $|X|\leq 2^{wL(X)\chi(X)}$.  It was asked in Question 1 in~\cite{Hodel2006} whether the normality condition can be weakened to regular, yet to the authors' knowledge the question is even open under the Tychonoff assumption (see Question~\ref{q1} below). Dow and Porter~\cite{DowPorter82} showed that $|X|\leq 2^{\psi_c(X)}$ if $X$ is H-closed, giving another class of spaces for which $|X|\leq 2^{wL(X)\chi(X)}$. As  $|X|\leq 2^{L(X)\chi(X)}$ \cite{arh1969} and $|X|\leq 2^{c(X)\chi(X)}$ \cite{HJ} for every Hausdorff space, it also follows trivially that if $X$ is Lindel\"of or $X$ has countable chain condition then $|X|\leq 2^{wL(X)\chi(X)}$. More recently, Gotchev~\cite{gotchev} has shown that if $X$ has a regular $G_\delta$-diagonal then the cardinality of $X$ has the even stronger bound $wL(X)^{\chi(X)}$. 

We then have five classes of Hausdorff spaces $X$ for which $|X|\leq 2^{wL(X)\chi(X)}$: Lindel\"of, c.c.c., normal, H-closed, and spaces having a regular $G_\delta$-diagonal. All of these properties can be thought of as ``strong'' in some sense. One may expect that these properties would be strong, as the cardinal invariant $wL(X)$ is generally regarded as a substantial weakening of the Lindel\"of degree $L(X)$, to such an extent that even if $X$ is normal it is not guaranteed that $wL(X)=L(X)$. (However, it is straightforward to see that if $X$ is paracompact then $wL(X)=L(X)$, as pointed out in 4.3 in~\cite{BGW1978}). Also, $wL(X)$ is small enough of an invariant so that $wL(X)\leq c(X)$, where $c(X)$ is the cellularity of $X$, itself regarded as being ``small''. 

In $\S2$ of this note we give other classes of spaces $X$ for which $|X|\leq 2^{wL(X)\chi(X)}$. It is shown in Corollary~\ref{cor4} that if $X$ is either Urysohn or quasiregular and has a dense subset $D$ such that each $d\in D$ has a closed neighborhood that is $H$-closed, normal, Lindel\"of, or c.c.c., then $|X|\leq 2^{wL(X)\chi(X)}$. The fundamental technique we develop is a closing-off argument given in the proof of Theorem~\ref{thm1}. This theorem is a modified version of Theorem 2.5(b) in~\cite{Carlson2013}. The latter theorem assumes a space with a dense set of isolated points, which we generalize in Theorem~\ref{thm1} to spaces with an open $\pi$-base $\scr{B}$ such that $|B|$ is still ``small'' for each $B\in\scr{B}$; that is, $|B|\leq 2^{\kappa}$ for a suitably defined cardinal $\kappa$. 

As every locally compact space is of pointwise countable type and thus $\chi(X)=\psi(X)$, it follows from Corollary~\ref{cor4} that the cardinality of a locally compact space is at most $2^{wL(X)\psi(X)}$ (Corollary~\ref{cor6}). This result then generalizes the well-known bound $2^{\psi(X)}$ for a compactum $X$ in a new direction. It is clear that $2^{\psi(X)}$ is not itself a cardinality bound for all locally compact spaces, as witnessed by a discrete space of size $2^\mathfrak{c}$. 

In Theorem~\ref{thm2} we give a short, direct proof that $2^{wL(X)\chi(X)}$ is a cardinality bound for any Hausdorff space $X$ that is locally H-closed, regular and locally normal, locally Lindel\"of, or locally c.c.c. (Note, however, that this also follows from Corollary~\ref{cor4} in the case where $X$ is Urysohn or quasiregular). It follows (again) that locally compact spaces $X$ satisfy $|X|\leq 2^{wL(X)\psi(X)}$. The result that every regular, locally normal space $X$ satisfies $2^{wL(X)\chi(X)}$ is a presents a proper improvement of the Bell, Ginsburg, and Woods result for normal spaces. 

In Theorem~\ref{thm1.5} we show that if $X$ is a regular space with a $\pi$-base whose elements have compact closure, then $|X|\leq 2^{wL(X)\psi(X)t(X)}$. The proof does not use the main Theorem~\ref{thm1} and gives a third proof that the cardinality of a locally compact space $X$ satisfies $|X|\leq 2^{wL(X)\psi(X)}$. It also give a partial answer to Question 4.1 in \cite{BGW1978}.

Also in $\S2$ we show that if $X$ is a weakly Lindel\"of, normal, sequential space satisfying $\chi(X)\leq 2^{\aleph_0}$, then $|X|\leq 2^{\aleph_0}$. One might think of this as an analogue of \arhangelskii's result that if $X$ is a Lindel\"of, Hausdorff, sequential space with $\psi(X)\leq 2^{\aleph_0}$ then $|X|\leq 2^{\aleph_0}$.

The main closing-off argument given in Theorem~\ref{thm1} also has implications for the cardinality of spaces with homogeneity-like properties, which we give in $\S3$. Recall a space $X$ is \emph{homogeneous} if for all $x,y\in X$ there exists a homeomorphism $h:X\to X$ such that $h(x)=y$, and $X$ is \emph{power homogeneous} if there exists a cardinal $\kappa$ such that $X^\kappa$ is homogeneous. It is shown in Corollary~\ref{lcpowerhomog} that if $X$ is locally compact and power homogeneous then $|X|\leq 2^{wL(X)t(X)}$. This represents an extension of the cardinality bound $2^{t(X)}$ for a power homogeneous compactum $X$ given by~\arhangelskii, van Mill, and Ridderbos~\cite{avr2007} in a new direction. (De la Vega first established the bound $2^{t(X)}$ for compact homogeneous spaces in~\cite{dlv2006}).  A key ingredient is Lemma~\ref{phlemma2}, which slightly improves Ridderbos' bound $d(X)^{\pichar{X}}$~\cite{rid2006} for the cardinality of a power homogeneous space $X$ and uses modified techniques from that paper. The cardinality bound $2^{wL(X)t(X)}$ for locally compact, power homogeneous spaces is then a ``companion bound'' to the bound $2^{wL(X)\psi(X)}$ for general locally compact spaces, both fundamentally proved with the same closing-off argument given in Theorem~\ref{thm1}. Such companion bounds for power homogeneous spaces also appear in~\cite{Carlson2013},~\cite{CPR2012}, and~\cite{CR2012}.

For definitions of cardinal invariants and other notions not defined in this note, we refer the reader to~\cite{Engelking},~\cite{Juhasz}, and~\cite{por88}. \emph{All spaces are assumed to be Hausdorff.}

\section{A closing-off argument involving the weak-Lindel\"of degree.}

In this section we aim towards demonstrating that minor separation requirements on a space $X$ (Urysohn or quasiregular) and the existence of a $\pi$-base $\scr{B}$ for $X$ such that $|B|\leq 2^{wL(X)\chi(X)}$ for all $B\in\scr{B}$ are sufficient to guarantee that $|X|\leq 2^{wL(X)\chi(X)}$ (Corollary~\ref{cor2}).

In Theorem~\ref{thm1} below we give the main closing-off argument at the core of our cardinality bound results. This theorem is similar to Theorem 2.5(b) in~\cite{Carlson2013}, but rather than the requirement that $X$ has a dense set of isolated points, we make the weaker assumption that $X$ has a $\pi$-base with ``small'' elements. 

Recall that for a space $X$, the $\theta$-\emph{closure} of a subset $A\sse X$ is $cl_\theta A=\{x\in X: clU\meet A\neq\es\textup{ for all open sets }U\textup{ containing }x\}$. A subset $D\sse X$ is $\theta$-\emph{dense} if $cl_\theta D=X$. The $\theta$-\emph{density} of space $X$ is $d_\theta(X)=\min\{|D|:D\textup{ is }\theta\textup{-dense in }X\}$. It is straightforward to see that if $X$ is regular then $cl_\theta A=clA$ for all $A\sse X$, and if $X$ is \emph{quasiregular}; that is, every non-empty open set contains a non-empty regular-closed set, then $d_\theta(X)=d(X)$.

\begin{theorem}\label{thm1}
Let $X$ be a space and $\kappa$ a cardinal such that $wL(X)t(X)\leq\kappa$. Suppose $X$ has an open $\pi$-base $\scr{B}$ such that $|B|\leq 2^\kappa$ for all $B\in\scr{B}$. Let $\scr{C}$ be a cover of $X$ consisting of compact subsets $C$ of $X$ such that $\chi(C,X)\leq\kappa$. Then there exists a subcollection $\scr{C}^\prime\sse\scr{C}$ such that $X=cl_\theta\left(\Un\scr{C}^\prime\right)$ and $|\scr{C}^\prime|\leq 2^\kappa$.
\end{theorem}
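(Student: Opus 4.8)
The plan is to reduce "$\theta$-dense" to a condition on the $\pi$-base, and then to run a single transfinite recursion of length $\kappa^+$ that selects the required compacta $\kappa$ at a time by repeated appeals to $wL(X)\leq\kappa$. First I would record the reduction that drives everything. Since $\scr{B}$ is a $\pi$-base and $cl(B)\sse cl(V)$ whenever $B\sse V$, a subset $D\sse X$ satisfies $cl_\theta D=X$ exactly when $cl(B)\meet D\neq\es$ for every $B\in\scr{B}$: indeed $x\notin cl_\theta D$ iff some open $U\ni x$ has $cl(U)\meet D=\es$, and any such $U$ contains a member of $\scr{B}$. Thus it suffices to produce $\scr{C}'\in\setlessequal{\scr{C}}{2^\kappa}$ with $cl(B)\meet\Un\scr{C}'\neq\es$ for all $B\in\scr{B}$.

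The structural fact that makes compacta usable in the absence of regularity is the following. For each $C\in\scr{C}$ fix a neighbourhood base $\scr{U}_C$ of $C$ in $X$, closed under finite intersections and of size $\leq\kappa$ (possible since $\chi(C,X)\leq\kappa$). If a closed set $F$ is disjoint from $C$, then, $C$ being compact and $X$ Hausdorff, $C$ and $F$ are separated by disjoint open sets, so some $U\in\scr{U}_C$ satisfies $U\meet F=\es$. This is the \emph{only} device I have for converting information about the neighbourhoods, which is all that the weak covering property ever delivers, into information about the cores $C$ themselves.

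Next I would build an increasing chain $\langle\scr{C}_\alpha:\alpha<\kappa^+\rangle$, each member in $\setlessequal{\scr{C}}{2^\kappa}$, with unions at limits. At stage $\alpha$ I feed $wL(X)\leq\kappa$ the honest open cover $\{U:U\in\scr{U}_C,\ C\in\scr{C}\}$ of $X$ (it covers $X$ because $\scr{C}$ does and $C\sse U$), obtaining a subfamily of size $\leq\kappa$ whose union is dense; I then adjoin to $\scr{C}_\alpha$ the cores of the chosen neighbourhoods and, where a chosen neighbourhood meets a $B$ not yet witnessed, a member of $\scr{C}$ through a point of that intersection. Bookkeeping (a surjection of $\kappa^+$ onto $\kappa^+\times\Un_{C}\scr{U}_C$) ensures that every base element of every core already selected is presented cofinally often, so that the neighbourhoods attached to a fixed core are refined without bound as the recursion proceeds. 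Since each stage adjoins at most $2^\kappa$ compacta and $\kappa^+\leq 2^\kappa$, the union $\scr{C}'$ satisfies $\card{\scr{C}'}\leq 2^\kappa$.

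The step I expect to be the main obstacle is verifying $cl_\theta(\Un\scr{C}')=X$, and the difficulty is precisely the failure of regularity: a chosen neighbourhood $U\in\scr{U}_C$ can protrude into a ``bad'' $B$ (one with $cl(B)\meet\Un\scr{C}'=\es$) even though its core $C$ misses $cl(B)$, so no single use of the weak covering property can settle all $B$ at once. This is compounded by the fact that $wL$ is \emph{not} inherited by arbitrary open subspaces, only by regular-closed ones (by the standard restriction of a dense-union subcover to $cl(\Un\scr{G})$), which is exactly why the cover fed to $wL$ must be a cover of all of $X$ rather than of the open region swept out by the bad $\pi$-base elements. I would resolve this through the closing-off: if some $B$ were bad, then at each stage the dense union of the chosen neighbourhoods meets $B$, hence meets $B$ through a neighbourhood whose core lies in $\Un\scr{C}'$ and so is disjoint from $cl(B)$; invoking the separation fact of the second paragraph, together with $t(X)\leq\kappa$ to witness the relevant membership in $cl(B)$ by a subset of size $\leq\kappa$ and the cofinal refinement of that core's neighbourhoods, one is eventually forced to a neighbourhood of a selected core that both lies in $X\minus cl(B)$ and meets $B$ --- a contradiction. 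Hence no $B\in\scr{B}$ is bad, and by the reduction $X=cl_\theta(\Un\scr{C}')$ with $\card{\scr{C}'}\leq 2^\kappa$.
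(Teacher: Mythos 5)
Your reduction of $\theta$-density to ``$cl(B)\meet\Un\scr{C}'\neq\es$ for every $B\in\scr{B}$'' is correct, and your ``separation fact'' is true but misjustified: in a Hausdorff space a compact set need \emph{not} be separated from a disjoint closed set by open sets (that is regularity-like); what you actually need --- some $U\in\scr{U}_C$ with $U\meet F=\es$ whenever $F$ is closed and disjoint from $C$ --- follows from the neighborhood-base property of $\scr{U}_C$ alone. The genuine gaps are in the recursion and the closing-off. First, cardinality: at each stage the union of the chosen neighborhoods is dense, so \emph{every} not-yet-witnessed $B\in\scr{B}$ meets some chosen neighborhood; your rule (``where a chosen neighbourhood meets a $B$ not yet witnessed, add a member of $\scr{C}$ through a point of that intersection'') therefore adds up to $|\scr{B}|$-many compacta per stage, and $|\scr{B}|$ can vastly exceed $2^\kappa$. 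If instead you cap each stage at $2^\kappa$ additions, then after $\kappa^+$ stages only $2^\kappa$-many $B$'s are ever witnessed directly and the remaining ones must be handled by your final argument, which is exactly where the sketch breaks down. (Also, the bookkeeping surjection $\kappa^+\to\kappa^+\times\Un_C\scr{U}_C$ cannot exist when $|\scr{C}'|\cdot\kappa=2^\kappa>\kappa^+$.) Second, and more fundamentally, the closing-off confuses \emph{presenting} a neighborhood to $wL(X)$ with $wL(X)$ \emph{selecting} it: weak Lindel\"ofness hands you an arbitrary $\kappa$-sized subfamily with dense union, and nothing forces it to contain the small neighborhood $U\in\scr{U}_C$ that misses $cl(B)$; so ``cofinal refinement'' never manufactures the impossible neighborhood (one meeting $B$ yet missing $cl(B)$), and no contradiction is reached. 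A symptom of the problem is that your argument never uses the hypothesis $|B|\leq 2^\kappa$ for $B\in\scr{B}$, which is essential to the theorem's mechanism.

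The paper's proof is organized precisely to avoid needing any control over what $wL$ returns. It builds an increasing chain of \emph{open} sets $A_\alpha$: at a successor stage, for every $\scr{V}\in\setlessequal{\scr{U}(\scr{C}_\beta)}{\kappa}$ whose union is not dense in $X$, it adds one $\pi$-base element $B_{\scr{V}}\sse X\minus cl\left(\Un\scr{V}\right)$ into $A_{\beta+1}$. Here $|B_{\scr{V}}|\leq 2^\kappa$ is exactly what keeps $|A_{\beta+1}|\leq 2^\kappa$, hence $d(clA_{\beta+1})\leq 2^\kappa$, so that Lemma 3.5 of [CPR2012] can cover $clA_{\beta+1}$ by at most $2^\kappa$ members of $\scr{C}$ (your proposal has no analogue of covering the closures of the added $\pi$-base elements by compacta, which is the other missing ingredient). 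Tightness $t(X)\leq\kappa$ makes $F=\Un_{\alpha<\kappa^+}clA_\alpha=cl\left(\Un_{\alpha<\kappa^+}A_\alpha\right)$ closed, hence \emph{regular-closed}, and $wL$ is hereditary on regular-closed sets. Then $wL$ is applied exactly once, at the end: if $x\notin cl_\theta\left(\Un\scr{C}'\right)$, take open $W\ni x$ with $\Un\scr{C}'\sse X\minus clW$, choose for each selected core $C$ a neighborhood $U_C\sse X\minus clW$, and apply $wL(F,X)\leq\kappa$ to this cover of $F$. Whatever subfamily $\scr{V}$ comes back automatically misses $W$ (every member does), so its union is not dense in $X$, and then condition (3) of the construction --- a $\pi$-base element of $A_{\alpha+1}\sse F$ was planted outside $cl\left(\Un\scr{V}\right)$ --- contradicts $F\sse cl\left(\Un\scr{V}\right)$. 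So where you try to steer $wL$ toward good neighborhoods, the paper instead pre-commits witnesses to non-density inside a regular-closed set and lets any $wL$-selection defeat itself. Your outline, as it stands, does not contain this mechanism and I do not see how to complete it along the lines you describe.
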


\begin{proof}
For every $C\in\mathcal{C}$, we fix a collection $\mathcal{U}_C$ of open subsets of $X$ that forms a neighborhood base at $C$ such that $|\mathcal{U}_C|\leq\kappa$. If $\mathcal{C}^\prime\subseteq\mathcal{C}$, then define $\mathcal{U}(\mathcal{C}^\prime) = \bigcup\{ \mathcal{U}_C : C\in\mathcal{C}^\prime\}$. We note that each $C\in\mathcal{C}$ is a $G^c_\kappa$-set, as defined in Definition 3.3 in \cite{CPR2012}. $G^c_\kappa$-sets are also referred to as regular $G_\kappa$-sets.

By induction we build an increasing sequence $\{A_\alpha : \alpha <\kappa^+\}$ of open subsets of $X$ and an increasing chain $\{\mathcal{C}_{\alpha} : \alpha < \kappa^+\}$ of subsets of $\mathcal{C}$ such that
\begin{enumerate}
\item $|\mathcal{C}_\alpha|\leq 2^\kappa$ and $|A_\alpha|\leq 2^\kappa$.
\item $\mathcal{C}_\alpha$ covers $clA_\alpha$,
\item if $\mathcal{V}\in[\mathcal{U}(\mathcal{C}_\alpha)]^{\leq\kappa}$ is such that $X\setminus cl\left(\Un\scr{V}\right)\neq\es$, then $A_{\alpha+1}\setminus cl\left(\Un\scr{V}\right)\neq\es$.
\end{enumerate}

For limit ordinals $\beta<\kappa^+$, we let $A_\beta=\bigcup_{\alpha<\beta} A_\alpha$. Then $|A_\beta|\leq 2^\kappa$ and hence $d(clA_\beta)\leq 2^\kappa$. By Lemma 3.5 in \cite{CPR2012} we obtain a collection $\mathcal{C}_\beta$ with the properties needed in (1) and (2). 

Consider a successor ordinal $\beta+1$. As $\scr{B}$ is a $\pi$-base, for each $\mathcal{V}\in[\mathcal{U}(\mathcal{C}_\beta)]^{\leq\kappa}$ for which $X\setminus cl\left(\Un\scr{V}\right)\neq\es$, there exists $B_{\scr{V}}\in\scr{B}$ such that $B_{\scr{V}}\sse X\setminus cl\left(\Un\scr{V}\right)\neq\es$ and $|B_{\scr{V}}|\leq 2^\kappa$. Define 
$$A_{\beta+1}=A_\beta\un\bigcup\left\{B_{\scr{V}}:\mathcal{V}\in[\mathcal{U}(\mathcal{C}_\beta]^{\leq\kappa}\textup{ and }X\setminus cl\left(\Un\scr{V}\right)\neq\es.\right\}$$

Observe that $A_\alpha$ is open for all $\alpha<\kappa^+$. As $|A_\beta|\leq 2^\kappa$, $|[\mathcal{U}(\mathcal{C}_\beta)]^{\leq\kappa}|\leq 2^\kappa$, and each $|B_{\scr{V}}|\leq 2^\kappa$, we have $|A_{\beta+1}|\leq 2^\kappa$ and thus $d(clA_{\beta+1})\leq 2^\kappa$. We again use Lemma 3.5 in \cite{CPR2012} to obtain $\mathcal{C}_{\beta+1}$.

Let $\mathcal{C}^\prime = \bigcup_{\alpha<\kappa^+}\mathcal{C}_\alpha$ and $F=\bigcup_{\alpha<\kappa^+} clA_\alpha$. Note that $|\scr{C}^\prime|\leq 2^\kappa$,  $F$ is closed since $t(X)\leq\kappa$, $F=cl\left(\Un_{\alpha<\kappa^+}A_\alpha\right)$, and $\mathcal{C}^\prime$ covers $F$. We now show that $X\sse cl_\theta\left(\Un\scr{C}^\prime\right)$. Suppose not and pick a point $x\in X\minus cl_\theta\left(\bigcup\mathcal{C}^\prime\right)$. Then there exists an open set $W$ containing $x$ such that $\bigcup\mathcal{C}^\prime\sse X\minus clW$. Hence for each $C\in\scr{C}^\prime$, we see that $C\sse X\minus clW$ and since $\mathcal{U}_C$ forms a neighborhood base at $C$ there exists $U_C\in\mathcal{U}_C$ such that $C\sse U_C\sse X\minus clW$. Then, as $\scr{C}^\prime$ covers $F$, $\mathcal{U} = \{ U_C : C\in\mathcal{C}^\prime\}$ is an open cover of $F$. As $wL(X)\leq \kappa$ and $F$ is regular-closed, it follows that $wL(F,X)\leq\kappa$ as $wL(X)$ is hereditary on regular-closed sets.We may find $\mathcal{V}\in[\mathcal{U}]^{\leq\kappa}$ such that $F\subseteq cl\left(\Un\scr{V}\right)$. Since $W\meet \bigcup\{V:V\in\mathcal{V}\}=\es$, we see that $x\in X\setminus cl\left(\Un\scr{V}\right)$. As $\mathcal{V}\in[\mathcal{U}(\mathcal{C}_\alpha)]^{\leq\kappa}$ for some $\alpha<\kappa^+$, by condition (3) we have $A_{\alpha+1}\setminus cl\left(\Un\scr{V}\right)$. But this is a contradiction since $A_{\alpha+1}\sse F\subseteq cl\left(\Un\scr{V}\right)$. Thus, $X\sse cl_\theta\left(\Un\scr{C}^\prime\right)$.
\end{proof}

We observe that in the above proof a chain $\{A_\alpha:\alpha<\kappa^+\}$ of open sets is inductively constructed by adding on members of a $\pi$-base. The closure of the union of the chain is then a regular-closed set $F$, and it follows that $wL(F,X)\leq wL(X)$. This procedure generalizes the case where $X$ has a dense set of isolated points given in Theorem 2.5(b) in~\cite{Carlson2013}.

The following corollary gives a bound for the $\theta$-density $d_\theta(X)$ of any space $X$ with a $\pi$-base consisting of ``small'' elements.
\begin{corollary}\label{cor1}
Let $X$ be a space with an open $\pi$-base $\scr{B}$ such that $|B|\leq 2^{wL(X)\chi(X)}$ for all $B\in\scr{B}$. Then $d_\theta(X)\leq 2^{wL(X)\chi(X)}$.
\end{corollary}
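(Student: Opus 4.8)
The plan is to apply Theorem~\ref{thm1} directly, with $\kappa=wL(X)\chi(X)$ and with $X$ covered by its singletons. First I would check that this $\kappa$ satisfies the hypotheses of the theorem. Since $t(X)\leq\chi(X)$ holds for every space, we have $wL(X)t(X)\leq wL(X)\chi(X)=\kappa$, so the requirement $wL(X)t(X)\leq\kappa$ is met. The open $\pi$-base $\scr{B}$ with $|B|\leq 2^\kappa$ for all $B\in\scr{B}$ is exactly what is assumed in the statement.

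Next I would produce the cover $\scr{C}$ of compact sets of small character demanded by the theorem. The natural choice is the cover by singletons, $\scr{C}=\{\{x\}:x\in X\}$. Each singleton is trivially compact, and a neighborhood base at the set $\{x\}$ is the same thing as a local base at the point $x$, so $\chi(\{x\},X)=\chi(x,X)\leq\chi(X)\leq\kappa$. Thus every member of $\scr{C}$ is a compact subset of $X$ with $\chi(C,X)\leq\kappa$, as required.

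Applying Theorem~\ref{thm1} now yields a subcollection $\scr{C}^\prime\sse\scr{C}$ with $X=cl_\theta\left(\Un\scr{C}^\prime\right)$ and $|\scr{C}^\prime|\leq 2^\kappa$. Setting $D=\Un\scr{C}^\prime$, the fact that the members of $\scr{C}^\prime$ are singletons gives $|D|=|\scr{C}^\prime|\leq 2^\kappa=2^{wL(X)\chi(X)}$, while $cl_\theta D=X$ says precisely that $D$ is $\theta$-dense in $X$. Hence $d_\theta(X)\leq|D|\leq 2^{wL(X)\chi(X)}$, which is the desired bound.

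I expect no genuine obstacle here: once the singleton cover is recognized, the corollary is an immediate specialization of Theorem~\ref{thm1}. The only point meriting a moment's attention is the character estimate $\chi(\{x\},X)\leq\chi(X)$, which is what allows the singleton cover to satisfy the character hypothesis and thereby convert the conclusion ``$X=cl_\theta(\Un\scr{C}^\prime)$'' into the existence of a small $\theta$-dense set.
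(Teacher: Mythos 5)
Your proof is correct and follows essentially the same route as the paper: apply Theorem~\ref{thm1} with $\kappa=wL(X)\chi(X)$ to the cover by singletons, then observe that the resulting $\theta$-dense union of singletons has cardinality at most $2^\kappa$. Your explicit verification that $wL(X)t(X)\leq\kappa$ via $t(X)\leq\chi(X)$ is a detail the paper leaves implicit, but it is the same argument.
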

\begin{proof}
Let $\kappa=wL(X)\chi(X)$. We note that $\scr{C}=\{\{x\}: x\in X\}$ is a collection of compact subsets of $X$ with character at most $\kappa$. By Theorem~\ref{thm1}, there exists a subcollection $\scr{C}^\prime\sse\scr{C}$ such that $X=cl_\theta\left(\Un\scr{C}^\prime\right)$ and $|\scr{C}^\prime|\leq 2^\kappa$. Then, as $\scr{C}^\prime$ consists of singletons, we have $d_\theta(X)\leq\left|\Un\scr{C}^\prime\right|=|\scr{C}^\prime|\leq 2^\kappa$.
\end{proof}

\begin{corollary}\label{cor2}
Let $X$ be Urysohn or quasiregular, and suppose $X$ has an open $\pi$-base $\scr{B}$ such that $|B|\leq 2^{wL(X)\chi(X)}$ for all $B\in\scr{B}$. Then $|X|\leq 2^{wL(X)\chi(X)}$. 
\end{corollary}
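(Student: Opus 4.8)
The plan is to derive the cardinality bound from the $\theta$-density bound established in Corollary~\ref{cor1}, exploiting the two separation hypotheses in parallel. Set $\kappa=wL(X)\chi(X)$. By Corollary~\ref{cor1} there is a $\theta$-dense set $D\sse X$ with $|D|\leq 2^\kappa$. The goal is to show that $D$ already accounts for all of $X$ up to the stated bound; that is, $|X|\leq 2^\kappa$.

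First I would handle the case where $X$ is quasiregular. Recall from the discussion preceding Theorem~\ref{thm1} that if $X$ is quasiregular then $d_\theta(X)=d(X)$. Hence the $\theta$-dense set $D$ can be taken to be genuinely dense, giving $d(X)\leq 2^\kappa$. Now the standard cardinality machinery applies: for a Hausdorff space with $\psi(X)\leq\chi(X)\leq\kappa$, one has the well-known bound $|X|\leq d(X)^{\chi(X)}$. Since $d(X)\leq 2^\kappa$ and $\chi(X)\leq\kappa$, this yields $|X|\leq (2^\kappa)^\kappa=2^\kappa$, as desired.

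For the Urysohn case I would argue through the $\theta$-closure directly rather than through ordinary density. The key point is that every point of $X$ lies in the $\theta$-closure of $D$, and I want to bound the number of points that a $\theta$-dense set of size $2^\kappa$ can produce. Fix $x\in X$ and let $\{W_i:i<\chi(X)\}$ be a local base at $x$. For each $i$ choose a point $d_i\in D\meet cl\,W_i$, which exists by $\theta$-density. In a Urysohn space, the closures $cl\,W_i$ separate distinct points: if $x\neq y$ there are open sets with disjoint closures separating them, so the induced map $x\mapsto (d_i)_{i<\chi(X)}$ together with the choice of base is injective enough to bound $|X|$. Concretely, each point is determined by a function from $\chi(X)\leq\kappa$ into $D$, so $|X|\leq |D|^\kappa\leq (2^\kappa)^\kappa=2^\kappa$.

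The main obstacle I anticipate is making the Urysohn argument genuinely rigorous, since the injectivity of the assignment $x\mapsto (d_i)_i$ requires care: one must verify that the Urysohn property forces two distinct points to receive distinct ``codes,'' which is exactly where the disjoint-closure separation is used, and one must confirm that a local base of size $\chi(X)$ is enough to witness this separation. An alternative route that sidesteps this delicacy is to use the known bound $|X|\leq d_\theta(X)^{\psi_\theta(X)}$ or a closure-type bound valid in Urysohn spaces directly; either way the arithmetic collapses to $2^\kappa$ because all the relevant invariants are bounded by $\kappa$. I would present whichever of these is cleanest, with the quasiregular case reduced immediately to ordinary density and the Urysohn case handled by the separation-of-closures counting argument.
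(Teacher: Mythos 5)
Your quasiregular case is correct and is exactly the paper's argument: Corollary~\ref{cor1} gives $d_\theta(X)\leq 2^{\kappa}$ where $\kappa=wL(X)\chi(X)$, quasiregularity gives $d_\theta(X)=d(X)$, and the well-known inequality $|X|\leq d(X)^{\chi(X)}$ for Hausdorff spaces finishes. The Urysohn case, however, has a genuine gap, precisely at the point you flagged: the map $x\mapsto (d_i)_{i<\chi(X)}$, where $d_i\in D\cap \overline{W_i^x}$, is simply not injective, and the Urysohn property does not make it so. Injectivity would require, whenever $x\neq y$, a single common index $i$ with $D\cap\overline{W_i^x}\cap\overline{W_i^y}=\varnothing$; the Urysohn property only provides some $i_0$ and some $j_0$ (in general different) with $\overline{W_{i_0}^x}\cap\overline{W_{j_0}^y}=\varnothing$, and since the enumerations of the two local bases are completely uncoordinated, every coordinate-wise intersection can meet $D$. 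Concretely, take $X=\mathbb{R}$, $D=\mathbb{Q}$, $x=0$, $y=1$: enumerate a base at $0$ so that the even-indexed sets shrink to $0$ while every odd-indexed set is $(-2,3)$, and enumerate a base at $1$ the other way around; then $\overline{W_i^x}\cap\overline{W_i^y}$ contains rationals for every $i$, so a legitimate choice function can assign $0$ and $1$ identical codes.

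What is missing is the extra data used in the classical proofs of such inequalities: besides the chosen points one must also record the combinatorial structure of the local base, for instance the relation $R_x=\{(i,j)\in\kappa\times\kappa : W_j^x\subseteq W_i^x\}$, so that the code $(R_x,(d_i)_i)$ describes a net together with its directed index set; uniqueness of limits (of $\theta$-type in the Urysohn setting) then makes $x\mapsto(R_x,(d_i)_i)$ injective, and the count is $2^{\kappa}\cdot|D|^{\kappa}=|D|^{\kappa}\leq 2^{\kappa}$. Note that recording ``the choice of base'' itself, as you suggest, would ruin the counting (there can be far more than $2^\kappa$ candidate bases); it is the abstract inclusion pattern, of which there are only $2^{\kappa}$, that must be encoded. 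Alternatively --- and this is exactly what the paper does --- one can simply quote Bella and Cammaroto~\cite{bel88}: $|X|\leq d_\theta(X)^{\chi(X)}$ for every Urysohn space $X$, which combined with Corollary~\ref{cor1} gives the bound at once. Your proposed fallback $|X|\leq d_\theta(X)^{\psi_\theta(X)}$ is not an available theorem; the exponent in the known Urysohn bound is $\chi(X)$, which suffices here since $\chi(X)\leq\kappa$.
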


\begin{proof}
If $X$ is Urysohn it was shown in~\cite{bel88} that $|X|\leq d_\theta(X)^{\chi(X)}$.Thus, by Corollary~\ref{cor1},
$$|X|\leq d_\theta(X)^{\chi(X)}\leq\left(2^{wL(X)\chi(X)}\right)^{\chi(X)}=2^{wL(X)\chi(X)}.$$
If $X$ is quasiregular then $d_\theta(X)=d(X)$. As $|X|\leq d(X)^{\chi(X)}$ for any Hausdorff space, we have $|X|\leq d_\theta(X)^{\chi(X)}$ and the result follows again from Cor.~\ref{cor1}.
\end{proof}

The space $Z$ in Example 2.3 in \cite{BGW1978} demonstrates that in Corollary~\ref{cor2} the condition that $X$ is Urysohn or quasiregular is necessary. We give a brief description of the space $Z$. Let $\kappa$ be an arbitrary uncountable cardinal and let $A$ be any countable dense subset of $\mathbb{P}$. Let $Z$ be the set $(\mathbb{Q}\times\kappa)\un A$. If $q\in\mathbb{Q}$ and $\alpha<\kappa$, then a neighborhood base at $(q,\alpha)$ is $\{U_n(q,\alpha):n\in\mathbb{N}\}$ where $U_n(q,\alpha)=\{(r,\alpha):r\in\mathbb{Q}\textup{ and }|r-q|<1/n\}$. If $\alpha\in A$, a neighborhood base at $a$ is $\{\{b\in A:|b-a|<1/n\}\un\{(q,\alpha):\alpha<\kappa\textup{ and }|q-a|<1/n\}:n\in\mathbb{N}\}$. $Z$ is an example of a weakly Lindel\"of, first countable Hausdorff space with arbitrary cardinality $\kappa$. Furthermore, $Z$ has a $\pi$-base of countable sets, namely the sets $U_n(q,\alpha)$ for $q\in\mathbb{Q}$ and $\alpha<\kappa$. Thus, in Corollary~\ref{cor2}, the condition that $X$ is Urysohn or quasiregular cannot be weakened to Hausdorff. 

\begin{corollary}\label{cor3}
Let $X$ be a space and $\scr{B}$ an open $\pi$-base. Suppose for all $B\in\scr{B}$ that $clB$ is H-closed, normal, Lindel\"of, or has the countable chain condition. Then $d_\theta(X)\leq 2^{wL(X)\chi(X)}$ and if $X$ is quasiregular or Urysohn then $|X|\leq 2^{wL(X)\chi(X)}$.
\end{corollary}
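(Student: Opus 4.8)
The plan is to reduce this statement to Corollary~\ref{cor2} by showing that the hypotheses on the closures $clB$ force the individual $\pi$-base elements $B$ to satisfy the cardinality bound $|B|\leq 2^{wL(X)\chi(X)}$. Once that estimate is in hand for every $B\in\scr{B}$, Corollary~\ref{cor1} immediately yields $d_\theta(X)\leq 2^{wL(X)\chi(X)}$, and Corollary~\ref{cor2} yields $|X|\leq 2^{wL(X)\chi(X)}$ under the Urysohn or quasiregular assumption. So the entire burden is the bound on $|B|$.

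First I would set $\kappa=wL(X)\chi(X)$ and fix an arbitrary $B\in\scr{B}$. The key observation is that $\chi(clB)\leq\chi(X)\leq\kappa$, since character can only decrease when passing to a subspace, and similarly the relevant ``small'' invariants of the subspace $clB$ are controlled by those of $X$. For each of the four listed properties of $clB$, I then invoke the corresponding classical cardinality bound to estimate $|clB|$, and hence $|B|\leq|clB|$:
\begin{itemize}
\item[] if $clB$ is H-closed, then $|clB|\leq 2^{\psi_c(clB)}\leq 2^{\chi(clB)}\leq 2^\kappa$ by Dow--Porter~\cite{DowPorter82};
\item[] if $clB$ is normal, then $|clB|\leq 2^{wL(clB)\chi(clB)}\leq 2^\kappa$ by Bell--Ginsburg--Woods~\cite{BGW1978};
\item[] if $clB$ is Lindel\"of, then $|clB|\leq 2^{L(clB)\chi(clB)}\leq 2^\kappa$ by \arhangelskii~\cite{arh1969};
\item[] if $clB$ has the countable chain condition, then $|clB|\leq 2^{c(clB)\chi(clB)}\leq 2^\kappa$ by the Hajnal--Juh\'asz bound~\cite{HJ}.
\end{itemize}
In each case I must check that the invariant appearing in the subspace bound is dominated by $\kappa$. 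For the character this is immediate; the point requiring a word of care is that $wL(clB)\leq wL(X)$ (in the normal case) and that the cellularity and Lindel\"of degree of the subspace are likewise bounded by $\kappa$. The Lindel\"of and c.c.c.\ cases present no difficulty since $L(clB)$ and $c(clB)$ are simply assumed finite/countable by hypothesis (Lindel\"of and c.c.c.\ mean $L=\aleph_0$ and $c=\aleph_0$ respectively). For the normal case I rely on the fact, used already in the proof of Theorem~\ref{thm1}, that $wL$ is hereditary on regular-closed sets, so $wL(clB)\leq wL(X)\leq\kappa$.

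The main obstacle is precisely this verification that the subspace invariants are controlled by the ambient ones, together with the subtlety that $wL(X)\chi(X)$ is being used as a single bound rather than separate control of each factor. Concretely, in the normal case one wants $wL(clB)\cdot\chi(clB)\leq wL(X)\cdot\chi(X)=\kappa$, which follows from the two heredity facts above; the remaining cases only need $\chi(clB)\leq\kappa$ since their other invariant is already countable. Having obtained $|B|\leq|clB|\leq 2^\kappa$ uniformly over $B\in\scr{B}$, the $\pi$-base $\scr{B}$ now satisfies exactly the hypothesis of Corollaries~\ref{cor1} and~\ref{cor2}, and the two conclusions of the corollary follow directly.
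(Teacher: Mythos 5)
Your proposal is correct and follows essentially the same route as the paper's own proof: bound $|clB|$ case by case via the Dow--Porter, Bell--Ginsburg--Woods, \arhangelskii, and Hajnal--Juh\'asz inequalities (using that $clB$ is regular-closed so $wL(clB)\leq wL(X)$, and that $\chi$ is hereditary), then feed the uniform bound $|B|\leq 2^{wL(X)\chi(X)}$ into Corollaries~\ref{cor1} and~\ref{cor2}. The only cosmetic difference is in the H-closed case, where you estimate $\psi_c(clB)\leq\chi(clB)\leq\chi(X)$ while the paper uses $\psi_c(clB)\leq\psi_c(X)$; both chains are valid.
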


\begin{proof}
Let $\kappa=wL(X)\chi(X)$ and let $B\in\scr{B}$. If $clB$ is H-closed, then $|clB|\leq 2^{\psi_c(clB)}\leq 2^{\psi_c(X)}\leq 2^{\kappa}$ by Corollary 2.3 in \cite{DowPorter82}. If $clB$ is normal, then $|clB|\leq 2^{wL(clB)\chi(clB)}\leq 2^\kappa$ by Theorem 2.1 in \cite{BGW1978} and that fact that $wL(X)$ is hereditary on regular-closed sets. If $clB$ is Lindel\"of then $|clB|\leq 2^{L(clB)\chi(clB)}= 2^{\chi(clB)}\leq 2^{\chi(X)}\leq 2^\kappa$. If $clB$ has the countable chain condition, then $|clB|\leq 2^{c(clB)\chi(clB)}\leq 2^{\chi(X)}\leq 2^\kappa$. In all cases we see that $|B|\leq |clB|\leq 2^\kappa$. The results now follow from Corollaries~\ref{cor1} and~\ref{cor2}.
\end{proof}

\begin{corollary}\label{cor4}
Let $X$ be a space that is either quasiregular or Urysohn. Suppose there exists a dense subset $D\sse X$ such that each $d\in D$ has a closed neighborhood that is H-closed, normal, or Lindel\"of, or c.c.c. Then $|X|\leq 2^{wL(X)\chi(X)}$.
\end{corollary}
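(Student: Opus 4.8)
The plan is to distill from the closed neighborhoods an open $\pi$-base whose members are ``small'' and then apply Corollary~\ref{cor2}. Put $\kappa=wL(X)\chi(X)$. For each $d\in D$ I fix a closed neighborhood $N_d$ of $d$ enjoying one of the four listed properties, together with an open set $V_d$ satisfying $d\in V_d\sse N_d$. Since $V_d$ is open in $X$ and sits inside the closed set $N_d$, it is open in the subspace $N_d$ and $cl V_d=cl_{N_d}V_d$; thus $cl V_d$ is regular-closed both in $X$ and in $N_d$. The object of the main computation is to verify that $|V_d|\leq|cl V_d|\leq 2^\kappa$ in every case, and the final step is to package the $V_d$ into a $\pi$-base.

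Three of the four cases run parallel to the proof of Corollary~\ref{cor3}, now applied to the regular-closed set $cl V_d$ in place of a basic closure. If $N_d$ is normal, then its closed subspace $cl V_d$ is normal, and regular-closedness gives $wL(cl V_d)\leq wL(X)\leq\kappa$, so Theorem 2.1 of \cite{BGW1978} yields $|cl V_d|\leq 2^{wL(cl V_d)\chi(cl V_d)}\leq 2^\kappa$. If $N_d$ is H-closed, then $cl V_d$ is regular-closed in $N_d$ and hence itself H-closed, so Corollary 2.3 of \cite{DowPorter82} gives $|cl V_d|\leq 2^{\psi_c(cl V_d)}\leq 2^{\chi(X)}\leq 2^\kappa$. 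If $N_d$ is Lindel\"of, then the closed subspace $cl V_d$ is Lindel\"of, whence $|cl V_d|\leq 2^{L(cl V_d)\chi(cl V_d)}=2^{\chi(cl V_d)}\leq 2^\kappa$.

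The hard part is the c.c.c.\ case, precisely because the countable chain condition is not inherited by closed (or even regular-closed) subspaces, so I cannot argue directly that $cl V_d$ is c.c.c. To circumvent this I would bound $N_d$ as a whole rather than its regular-closed piece: applying the Hajnal--Juh\'asz inequality \cite{HJ} to the Hausdorff space $N_d$ gives $|N_d|\leq 2^{c(N_d)\chi(N_d)}\leq 2^{\aleph_0\cdot\chi(X)}\leq 2^\kappa$, and therefore $|cl V_d|\leq|N_d|\leq 2^\kappa$. This completes the verification that $|V_d|\leq 2^\kappa$ for all $d\in D$.

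Finally I would assemble the collection $\scr{B}=\{W:W\text{ open},\ W\neq\es,\ \text{and }W\sse V_d\text{ for some }d\in D\}$. Given a nonempty open $U\sse X$, density of $D$ supplies a point $d\in U\meet D$, and then $U\meet V_d$ is a nonempty element of $\scr{B}$ contained in $U$; hence $\scr{B}$ is an open $\pi$-base, and every $B\in\scr{B}$ satisfies $|B|\leq|V_d|\leq 2^\kappa$ for the relevant $d$. Since $X$ is quasiregular or Urysohn, Corollary~\ref{cor2} applies and delivers $|X|\leq 2^{wL(X)\chi(X)}$.
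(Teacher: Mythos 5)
Your argument is correct and follows essentially the paper's own route: extract from the dense set $D$ an open $\pi$-base whose members have cardinality at most $2^{\kappa}$, where $\kappa=wL(X)\chi(X)$, and feed it to Corollary~\ref{cor2}. The paper packages the case-by-case size estimates as Corollary~\ref{cor3} (it builds a $\pi$-base whose members have closures that are H-closed, normal, Lindel\"of, or c.c.c.\ and then cites that corollary), whereas you inline the same estimates; your H-closed, normal, and Lindel\"of cases match the paper's almost verbatim. The one real deviation is the c.c.c.\ case, and there your stated reason for the detour is false: while the countable chain condition is indeed not closed-hereditary, it \emph{is} hereditary on regular-closed subspaces, and the paper relies on exactly this fact (``the H-closed and c.c.c properties are hereditary on regular-closed sets''). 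To see it, note that $V_d$ is dense in $cl V_d$, so every nonempty relatively open subset $W$ of $cl V_d$ meets $V_d$, and $W\meet V_d$ is open in $N_d$; hence any cellular family in $cl V_d$ yields one of the same cardinality in $N_d$, giving $c(cl V_d)\leq c(N_d)=\aleph_0$, after which Hajnal--Juh\'asz applies to $cl V_d$ just as in the other cases. Thus you could have treated c.c.c.\ in parallel with the rest. Your workaround is nevertheless sound: applying Hajnal--Juh\'asz~\cite{HJ} to the Hausdorff space $N_d$ and using monotonicity of character on subspaces gives $|cl V_d|\leq |N_d|\leq 2^{c(N_d)\chi(N_d)}\leq 2^{\kappa}$, so no gap results --- the erroneous side remark is harmless to the proof, merely a missed simplification.
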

\begin{proof}
We show that $X$ has a $\pi$-base with the property described in Cor.~\ref{cor3}. Let $U$ be a non-empty open set in $X$. There exists $d\in U\meet D$ and an open set $V$ containing $d$ such that $clV$ is H-closed, normal, Lindel\"of, or c.c.c. Let $B=U\meet V$. Then $d\in B$. As the H-closed and c.c.c properties are hereditary on regular-closed sets, and normality and Lindel\"of-ness are closed hereditary, we see that $clB$ is H-closed, normal, Lindel\"of, or c.c.c. As $B\sse U$, this shows $X$ has a $\pi$-base with the properties described in Cor.~\ref{cor3}. Thus, $|X|\leq 2^{wL(X)\chi(X)}$.
\end{proof}

The following result of Dow and Porter~\cite{DowPorter82} follows immediately, as any space with a dense set of isolated points is quasiregular.

\begin{corollary}[Dow-Porter]\label{cor5}
If $X$ has a dense set of isolated points then $|X|\leq 2^{wL(X)\chi(X)}$.
\end{corollary}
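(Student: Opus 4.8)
The plan is to derive this as an immediate consequence of Corollary~\ref{cor4}, since a space with a dense set of isolated points falls into the framework already established. First I would observe that if $X$ has a dense set of isolated points, then the set $D$ of isolated points is itself a dense subset of $X$ witnessing the hypothesis of Corollary~\ref{cor4}. Indeed, for each isolated point $d\in D$, the singleton $\{d\}$ is open, and hence $\{d\}$ is a closed neighborhood of $d$ (it is both open and closed, being a singleton in a Hausdorff space). A one-point space is trivially compact, hence H-closed, normal, Lindel\"of, and c.c.c.\ all at once, so every isolated point has a closed neighborhood of the required type.

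The second step is to verify the separation hypothesis of Corollary~\ref{cor4}. The key observation is that any space possessing a dense set of isolated points is automatically quasiregular: every non-empty open set $U$ contains some isolated point $d$, and then $\{d\}=cl\{d\}$ is a non-empty regular-closed set contained in $U$. Thus the quasiregularity requirement of Corollary~\ref{cor4} is satisfied without any additional assumption on $X$ beyond the standing Hausdorff hypothesis.

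With these two observations in place, the conclusion $|X|\leq 2^{wL(X)\chi(X)}$ follows directly by applying Corollary~\ref{cor4} to the dense set $D$ of isolated points. I do not anticipate any genuine obstacle here; the entire content lies in recognizing that isolated points furnish both the quasiregularity and the pointwise closed-neighborhood condition for free, so this corollary is essentially a specialization of the preceding one. The only point requiring a moment's care is confirming that a singleton isolated point genuinely counts as a \emph{closed} neighborhood, which is immediate since singletons are closed in Hausdorff spaces and the point is isolated.
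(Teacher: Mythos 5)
Your proof is correct and takes essentially the same route as the paper, which derives this statement immediately from Corollary~\ref{cor4} by noting precisely your key observation: a space with a dense set of isolated points is quasiregular. Your additional verification that each singleton $\{d\}$ is a compact (hence H-closed, normal, Lindel\"of, and c.c.c.) closed neighborhood correctly fills in the remaining hypothesis of Corollary~\ref{cor4}, so nothing is missing.
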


We show in Theorem~\ref{thm1.5} below that if $X$ is regular with a $\pi$-base whose elements have compact closure, then $|X|\leq 2^{wL(X)\psi(X)t(X)}$. One can view this theorem is a variation of Corollary~\ref{cor3} above, where the hypotheses are strengthened resulting in a stronger cardinality bound. Note that the proof of Theorem~\ref{thm1.5} does not use Theorem~\ref{thm1} nor any of its corollaries.

\begin{theorem}\label{thm1.5}
If $X$ is a regular space with a $\pi$-base whose elements have compact closure, then $|X|\leq 2^{wL(X)\psi(X)t(X)}$. 
\end{theorem}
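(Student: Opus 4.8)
The plan is to prove the bound in two stages: a closing-off argument in the spirit of Theorem~\ref{thm1} that produces a dense subset of $X$ which is a union of at most $2^\kappa$ compacta, followed by an \arhangelskii-type estimate that converts this into a cardinality bound. Throughout set $\kappa=wL(X)\psi(X)t(X)$; since $X$ is regular, $cl_\theta A=\overline A$ for all $A$ and $X$ is quasiregular, so all closures below are ordinary ones.

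Write $\scr{K}=\{\overline B:B\in\scr{B}\}$. Each $K\in\scr{K}$ is a compactum, so on it character and pseudocharacter agree; as $\psi$ is monotone, $\chi(K)=\psi(K)\le\psi(X)\le\kappa$ and hence $|K|\le 2^{\chi(K)}\le 2^\kappa$. Moreover $K=\overline B=\overline{\mathrm{int}\,K}$ is regular-closed with nonempty interior, so by shrinking with regularity every nonempty open set contains a member of $\scr{K}$; thus $\scr{K}$ is a $\pi$-network of regular-closed compacta, and the closure of the union of any subfamily of $\scr{K}$ is again regular-closed. In particular, if $T$ is a union of members of $\scr{K}$ then $wL(\overline T,X)\le wL(X)\le\kappa$, since $wL$ is hereditary on regular-closed sets.

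Assume for the moment that the closing-off has delivered $\scr{C}'\subseteq\scr{K}$ with $|\scr{C}'|\le 2^\kappa$ and $X=\overline{\bigcup\scr{C}'}$. Then the cardinality bound would follow from the key estimate
\[
|\overline T|\le 2^\kappa\quad\text{for every union $T$ of at most $\kappa$ members of $\scr{K}$.}
\]
Indeed, by $t(X)\le\kappa$ each $p\in X=\overline{\bigcup\scr{C}'}$ lies in $\overline A$ for some $A\in[\bigcup\scr{C}']^{\le\kappa}$, and such an $A$ meets at most $\kappa$ members of $\scr{C}'$; letting $T$ be their union gives $p\in\overline T$. As there are at most $(2^\kappa)^\kappa=2^\kappa$ such unions $T$, the estimate yields $|X|\le 2^\kappa\cdot\sup_T|\overline T|=2^\kappa$.

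Two steps remain and together form the heart of the proof. To build $\scr{C}'$ I would run the recursion of Theorem~\ref{thm1} on ordinals below $\kappa^+$, adding at successor stages members of $\scr{K}$ that witness the failure of weak-Lindel\"of near-covers assembled from the points obtained so far (clause~(3) there); the regular-closedness of unions from $\scr{K}$ and the heredity of $wL$ on such sets drive the escaping-point contradiction, now forcing $X=\overline{\bigcup\scr{C}'}$ rather than only a $\theta$-density statement. For the key estimate I would exploit that a union $T$ of at most $\kappa$ compacta is genuinely $\kappa$-Lindel\"of, i.e.\ $L(T)\le\kappa$, so that on the dense subspace $T$ of the regular space $\overline T$ (which has $\psi(\overline T)\,t(\overline T)\le\kappa$ and $d(\overline T)\le|T|\le 2^\kappa$) one has honest $\le\kappa$-subcovers rather than the merely dense subcovers supplied by $wL$, and I would feed these into an \arhangelskii-type closing-off using the pseudobases and tightness. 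The main obstacle is precisely that the members of $\scr{K}$ are \emph{not} assumed to have character $\le\kappa$ in $X$: since $X$ need not be locally compact, one cannot fix small neighborhood bases for these compacta and cannot apply Theorem~\ref{thm1} with $\scr{K}$ as its compact cover. All covering power must instead be extracted from (i) the regular-closedness of unions from $\scr{K}$, which keeps $wL\le\kappa$ available along the chain, and (ii) the genuine $\kappa$-Lindel\"ofness of $\kappa$-sized subunions, which upgrades weak covers to honest ones on the pieces $\overline T$. Carrying out the key estimate $|\overline T|\le 2^\kappa$—converting the dense, $\kappa$-Lindel\"of, regular structure of $\overline T$ into a cardinality bound through $\psi$ and $t$—is the crux, and it is exactly the step that is unavailable under the weaker closure hypotheses of Corollary~\ref{cor3}.
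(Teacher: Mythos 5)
Your two--stage architecture (a closing-off stage producing $\scr{C}'\sse\scr{K}$ with $X=\overline{\bigcup\scr{C}'}$, then a tightness conversion) is reasonable, and the stage-2 bookkeeping is sound; but the proposal has a genuine gap exactly where you locate the ``main obstacle,'' and it misplaces the crux. The unresolved point is stage 1. In any closing-off argument of this type, the contradiction step needs an open cover of the regular-closed set $F$ (the closure of the union of the chain) by open sets that simultaneously (a) miss the escape set and (b) are drawn from $\kappa$-sized families fixed in advance and recorded along the chain, so that the analogue of clause (3) can be invoked at some stage $\alpha<\kappa^+$. Your tools (i) and (ii) --- regular-closedness of unions from $\scr{K}$, and $L(T)\le\kappa$ for $\kappa$-sized subunions --- do not produce such a cover: they give no way to manufacture, from data recorded at earlier stages, open sets missing the escape set. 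The paper supplies precisely this missing device, and it is the one place where compactness of the closures of $\pi$-base elements is genuinely used in the recursion: for each $p\in X$ fix (regularity plus $\psi(X)\le\kappa$) a family $\scr{U}_p$ of at most $\kappa$ open sets, \emph{closed under finite intersections}, with $\bigcap\{\overline{U}:U\in\scr{U}_p\}=\{p\}$; if $B\in\scr{B}$ has $\overline{B}$ compact and disjoint from $F$, then $\{\overline{B}\meet\overline{U}:U\in\scr{U}_p\}$ is a family of closed subsets of the compactum $\overline{B}$ with empty intersection, so compactness yields a \emph{single} $U_p\in\scr{U}_p$ with $U_p\meet\overline{B}=\es$. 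The cover $\{U_p:p\in F\}$ of $F$ then feeds into $wL(F,X)\le\kappa$ and the recorded condition, producing the contradiction. Without this (or an equivalent) separation step your recursion cannot be closed off, since, as you note yourself, neighborhood bases of the compacta $\overline{B}$ are unavailable.

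Second, what you call the crux --- the estimate $|\overline{T}|\le 2^\kappa$ for $T$ a union of at most $\kappa$ members of $\scr{K}$ --- is not a crux at all: since $\psi$ and $t$ are hereditary and $d(\overline{T})\le|T|\le\kappa\cdot 2^\kappa=2^\kappa$, it is immediate from the known inequality $|Y|\le d(Y)^{\psi(Y)t(Y)}$ for regular $Y$, which the paper invokes (without proof) twice; no new Arhangel'skii-type closing-off is needed there. Indeed the paper's proof is architecturally leaner than your plan: it builds a single chain of \emph{open} sets $A_\alpha$ (unions of $\pi$-base elements chosen against the recorded families $\scr{U}_p$, $p\in A_\alpha$), keeps $|\overline{A_\alpha}|\le 2^\kappa$ at every stage via that same inequality, and shows directly that $X=\bigcup_{\alpha<\kappa^+}\overline{A_\alpha}$, so no separate conversion through $\scr{C}'$ and tightness is required. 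Your outline could be repaired by importing the paper's point-pseudobase/compactness device into stage 1 and replacing your stage-2 program by the quoted inequality, but as written the decisive step is absent.
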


\begin{proof}
Let $\kappa=wL(X)\psi(X)t(X)$ and let $\scr{B}$ be the collection of all open sets with compact closure. As $X$ is regular, we may find for any $p\in X$ a family of open sets $\scr{U}_p$ such that $\{p\}=\Meet\scr{U}_p=\Meet\{clU:U\in\scr{U}_p\}$ and $|\scr{U}_p|\leq\kappa$. We may assume without loss of generality that each $\scr{U}_p$ is closed under finite intersections. We construct by transfinite induction a non-decreasing chain of open sets $\{A_\alpha:\alpha<\kappa^+\}$ such that (1) $cl(A_\alpha)\leq 2^{\kappa}$ for all $\alpha<\kappa^+$, and (2) if $X\minus cl(\Un\scr{W})\neq\es$ for $W\in\left[\Un\{\scr{U}_p:p\in A_\alpha\}\right]^{\leq\kappa}$, then $A_{\alpha+1}\minus cl(\Un\scr{W})\neq\es$.

For limit ordinals $\beta<\kappa^+$, let $A_\beta=\Un_{\alpha<\beta}A_{\alpha}$. Then $|A_\beta|\leq 2^\kappa$. Using that $|Y|\leq d(Y)^{\psi(Y)t(Y)}$ for any regular space $Y$, it follows that $|cl(A_\beta)|\leq 2^\kappa$. For a successor ordinal $\beta+1$, for every $\scr{W}\in\left[\Un\{\scr{U}_p:p\in A_\beta\}\right]^{\leq\kappa}$ such that $X\minus cl(\Un\scr{W})\neq\es$, we chose $B_\scr{W}\in\scr{B}$ such that
$cl(B_\scr{W})\in X\minus cl(\Un\scr{W})$. As each $cl(B_\scr{W})$ is compact, we see that $|cl(B_\scr{W})|\leq 2^{\psi(X)}\leq 2^\kappa$. We define 
$$A_{\beta+1}=A_\beta\un\Un\{B_\scr{W}: \scr{W}\in\left[\Un\{\scr{U}_p:p\in A_\beta\}\right]^{\leq\kappa}\textup{ and }X\minus cl(\Un\scr{W})\neq\es\}.$$
We again see that $|cl(A_{\beta+1)}|\leq 2^\kappa$, using that $|Y|\leq d(Y)^{\psi(Y)t(Y)}$ for any regular space $Y$.\\
Let $F=\Un\{cl(A_\alpha):\alpha<\kappa^+\}$ and note $|F|\leq 2^\kappa$. We show that $X=F$. By $t(X)\leq\kappa$, we have $F=cl\left(\Un\{A_\alpha:\alpha<\kappa^+\}\right)$ and so $F$ is regular-closed. Suppose that $X\neq F$ and choose a non-empty $B\in\scr{B}$ such that $clB\sse X\minus F$. Observe that for any $p\in F$, we have $\Meet\{clB\meet clU:U\in\scr{U}_p\}=\es$. Now, the compactness of $cl B$ and the fact that $\scr{U}_p$ is closed under finite intersections ensure the existence of some $U_p\in\scr{U}_p$ such that $U_p\meet clB=\es$. As $\{U_p:p\in F\}$ is an open cover of $F$ and $wL(X)$ is hereditary on regular-closed sets, there exists $\scr{W}\in\{U_p:p\in F\}^{\leq\kappa}$ such that $F\sse cl\left(\Un\scr{W}\right)$. There exists $\alpha<\kappa^+$ such that $\scr{W}\in\left[\Un\{\scr{U}_p:p\in A_\alpha\}\right]^{\leq\kappa}$. As $B\meet\Un\scr{W}=\es$ it follows that $X\minus cl\left(\Un\scr{W}\right)\neq\es$. Thus, $B_\scr{W}$ is defined and we have $\es\neq B_\scr{W}\sse A_{\alpha+1}\minus cl\left(\Un\scr{W}\right)\sse F\minus cl\left(\Un\scr{W}\right)=\es$. As this is a contradiction we have $X=F$ and hence $|X|\leq 2^\kappa$.
\end{proof}
The following corollary is immediate from Theorem~\ref{thm1.5}.
\begin{corollary} 
If $X$ is regular with a dense set of isolated points, then $|X|\leq 2^{wL(X)\psi(X)t(X)}$.
\end{corollary}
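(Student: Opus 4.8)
The plan is to obtain this as an immediate specialization of Theorem~\ref{thm1.5}: the entire task reduces to producing, from the dense set of isolated points, a $\pi$-base whose elements have compact closure, after which Theorem~\ref{thm1.5} applies with no further work.

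First I would fix a dense subset $D\sse X$ consisting of isolated points and set $\scr{B}=\{\{d\}:d\in D\}$. Each $\{d\}$ is open precisely because $d$ is isolated, so $\scr{B}$ is a collection of non-empty open sets. To see that $\scr{B}$ is a $\pi$-base, take any non-empty open $U\sse X$; density of $D$ furnishes a point $d\in U\meet D$, and then $\{d\}\in\scr{B}$ with $\{d\}\sse U$, which is exactly the defining property of a $\pi$-base. Next I would verify the compact-closure condition: since $X$ is Hausdorff (indeed regular), every singleton is closed, so $cl\{d\}=\{d\}$ for each $d\in D$, and a one-point set is trivially compact. Hence every element of $\scr{B}$ has compact closure, so $X$ is a regular space equipped with a $\pi$-base whose elements have compact closure, and Theorem~\ref{thm1.5} yields $|X|\leq 2^{wL(X)\psi(X)t(X)}$.

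There is no genuine obstacle here; the argument is purely a matter of recognizing that isolated points furnish singleton $\pi$-base elements and that in a Hausdorff space these singletons are their own closures. The only point meriting a moment's care is confirming that the relevant closures are truly compact, which is immediate since each is a single point.
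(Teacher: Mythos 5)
Your proof is correct and is exactly the argument the paper intends: the paper states the corollary is ``immediate from Theorem~\ref{thm1.5},'' and your verification that the singletons of isolated points form a $\pi$-base whose elements are their own (compact) closures is precisely that immediate reduction.
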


In Question 4.1 in~\cite{BGW1978}, it was asked whether $|X|\leq 2^{wL(X)\psi(X)t(X)}$ for a normal space $X$. Example 2.4 in that paper, described below, demonstrates that $|X|\leq 2^{wL(X)\psi(X)t(X)}$ does not hold for all regular spaces (nor, in fact, even for all zero-dimensional spaces). However, as we see in Theorem~\ref{thm1.5} above, if $X$ is regular with the added requirement that $X$ has a $\pi$-base whose elements have compact closure, then $|X|\leq 2^{wL(X)\psi(X)t(X)}$. This gives a partial answer to Question 4.1 in~\cite{BGW1978}. For the reader's benefit we give a description below of the space $Y$ in Example 2.4 in~\cite{BGW1978}.

\begin{example}[Example 2.4 in~\cite{BGW1978}]\label{example1}
Let $\kappa$ be any uncountable cardinal, let $\mathbb{Q}$ denote the rationals, and let $A$ be any countable dense subset of the space of irrational numbers. Let $Y$ be the set $(\mathbb{Q}\times\kappa)\un A$ with the following topology. If $q\in\mathbb{Q}$ and $\alpha<\kappa$ then a neighborhood base at $(q,\alpha)$ is $\{U_n(q,\alpha):n=1, 2, \ldots\}$ where $U_n(q,\alpha)=\{(r,\alpha):r\in\mathbb{Q}\textup{ and }|r-q|<1/n\}$. If $a\in A$, $n\in\mathbb{N}$, and $F\in[\kappa]^{<\omega}$, let $V_{n,F}(a)=\{b\in A:|b-a|\leq 1/n\}\un \{(q,\alpha)\in\mathbb{Q}\times\kappa:|q-\alpha|<1/n\textup{ and }\alpha\notin F\}$. Then $\{V_{n,F}(a):n\in\mathbb{N}\textup{ and }F\in[\kappa]^{<\omega}\}$ is a neighborhood base at $a$. 

It can be shown that the space $Y$ is a zero-dimensional Hausdorff space such that $wL(Y)=\aleph_0$, $t(Y)=\aleph_0$, and every subset of $Y$ is $G_\delta$ (in particular, $\psi(Y)=\aleph_0$). Thus, if $\kappa>\mathfrak{c}$, we have $|Y|=\kappa>2^{wL(Y)t(Y)\psi(Y)}$. We further observe that the family $\{U_n(q,\alpha):q\in\mathbb{Q}, \alpha<\kappa, n\in\mathbb{N}\}$ is a $\pi$-base for $Y$ consisting of countable sets. Thus, the condition in Theorem~\ref{thm1.5} above that $X$ have a $\pi$-base whose elements have compact closure cannot be changed to the condition that $X$ have a $\pi$-base whose elements are countable (or Lindel\"of or c.c.c).\qed
\end{example}

For a property $\scr{P}$ of a space $X$, we say $X$ is \emph{locally} $\scr{P}$ if every point in $X$ has a neighborhood with property $\scr{P}$. We give a short proof below that if a Hausdorff space $X$ has one of several local properties then $|X|\leq 2^{wL(X)\chi(X)}$. Note that in the case of the property locally normal we make the additional requirement that $X$ be regular. The proof of Theorem~\ref{thm2} does not use Theorem~\ref{thm1}, yet Theorem~\ref{thm2} follows from Corollary~\ref{cor4} in the case where $X$ is Urysohn or quasiregular.

\begin{theorem}\label{thm2}
Let $X$ be locally H-closed, locally Lindel\"of, locally c.c.c., or regular and locally normal.
Then $|X|\leq 2^{wL(X)\chi(X)}$.
\end{theorem}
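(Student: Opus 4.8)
The plan is to set $\kappa=wL(X)\chi(X)$ and reduce the whole theorem to the single inequality $d(X)\leq 2^\kappa$. Once that is in hand, the bound follows at once from the standard Hausdorff inequality $|X|\leq d(X)^{\chi(X)}$ (already invoked in the proof of Corollary~\ref{cor2}), since $(2^\kappa)^{\chi(X)}=2^\kappa$ because $\chi(X)\leq\kappa$. To obtain $d(X)\leq 2^\kappa$ I would first attach to each point an open neighborhood of cardinality at most $2^\kappa$, and then use the weak Lindel\"of degree to thin the resulting open cover down to a dense union of at most $\kappa$ such neighborhoods. This keeps the argument completely independent of Theorem~\ref{thm1}.

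Concretely, the first step is to show that for every $x\in X$ there is an open set $V_x$ with $x\in V_x$ and $|V_x|\leq 2^\kappa$. If $N_x$ is a neighborhood of $x$ that is H-closed, Lindel\"of, or c.c.c., set $V_x=\textup{int}\,N_x$, so that $|V_x|\leq|N_x|$; since character is monotone on subspaces, $\chi(N_x)\leq\chi(X)\leq\kappa$. The three relevant cardinality bounds are ``absolute'' in the sense that they do not refer to $wL$ of the subspace: Dow--Porter gives $|N_x|\leq 2^{\psi_c(N_x)}\leq 2^{\chi(N_x)}\leq 2^\kappa$ in the H-closed case, \arhangelskii's inequality gives $|N_x|\leq 2^{L(N_x)\chi(N_x)}=2^{\chi(N_x)}\leq 2^\kappa$ in the Lindel\"of case, and the Hajnal--Juh\'asz bound gives $|N_x|\leq 2^{c(N_x)\chi(N_x)}\leq 2^\kappa$ in the c.c.c. case. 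In each case $|V_x|\leq 2^\kappa$.

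The delicate case, and the one I expect to be the main obstacle, is ``regular and locally normal,'' since the only cardinality bound we have for normal spaces is Bell--Ginsburg--Woods, $|N|\leq 2^{wL(N)\chi(N)}$, which \emph{does} involve $wL$ of the subspace. Because $wL$ is hereditary only on regular-closed sets and normality is only closed-hereditary, I cannot simply apply this to an arbitrary normal neighborhood. Here I would use regularity to shrink: choose open $V_x$ with $x\in V_x\sse clV_x\sse\textup{int}\,N_x$. Then $clV_x$ is a closed subspace of the normal set $N_x$, hence normal, and it is regular-closed in $X$, so $wL(clV_x)\leq wL(X)\leq\kappa$ and $\chi(clV_x)\leq\chi(X)\leq\kappa$. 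Bell--Ginsburg--Woods then yields $|V_x|\leq|clV_x|\leq 2^\kappa$. This is precisely where regularity enters, and it explains why ``locally normal'' alone is not assumed.

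Finally, $\{V_x:x\in X\}$ is an open cover of $X$, so from $wL(X)\leq\kappa$ I would extract $S\sse X$ with $|S|\leq\kappa$ and $X=cl\bigl(\Un_{x\in S}V_x\bigr)$. The set $D=\Un_{x\in S}V_x$ is then dense with $|D|\leq\kappa\cdot 2^\kappa=2^\kappa$, so $d(X)\leq 2^\kappa$ and the proof closes as indicated. I expect the only genuine work to be the bookkeeping across the four local properties; of these, the normal case is the one demanding the extra regular-closed shrinking step.
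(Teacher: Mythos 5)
Your proof is correct and follows essentially the same route as the paper's: cover $X$ by neighborhoods of cardinality at most $2^{\kappa}$ (via Dow--Porter, \arhangelskii, Hajnal--Juh\'asz, and Bell--Ginsburg--Woods respectively), use $wL(X)\leq\kappa$ to extract a subfamily of size at most $\kappa$ with dense union, and finish with $|X|\leq d(X)^{\chi(X)}$. Your regular-closed shrinking step in the locally normal case is exactly the paper's device of taking the cover to consist of regular-closed normal subspaces so that $wL(U)\leq wL(X)$ applies; you have merely spelled out the detail the paper states tersely.
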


\begin{proof}
Let $\kappa=wL(X)\chi(X)$. One can find a cover
$\mathcal{U}$ of $X$ such that $U$ has a non-empty interior and 
is H-closed, 
Lindel\"of or c.c.c for each $U\in\mathcal{U}$. Moreover, if 
$X$ is regular and locally normal,  then the cover $\mathcal  
{U}$
can consist of regular closed normal subspaces. Note that
$|U|\leq 2^\kappa$ for all $U\in\scr{U}$ (the last case uses the
fact that $wL(U)\le wL(X)$). Choose a subfamily
$\mathcal{V}\sse\mathcal{U}$ such that $|\mathcal{V}|\leq wL(X)$
and  $\cup\mathcal{V}$ is dense in $X$. To finish, observe that
$\cup\mathcal{V}$ has cardinality $\leq 2^{wL(X)\chi(X)}$  and
apply the well-known inequality $|X|\leq d(X)^{\chi(X)}$, true
for every Hausdorff space.
\end{proof}

The last instance of the previous theorem is a proper strengthening of Bell-Ginsburg-Woods's result.

\begin{corollary}\label{cor6}
If $X$ is locally compact then $|X|\leq 2^{wL(X)\psi(X)}$.
\end{corollary}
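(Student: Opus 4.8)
The plan is to deduce Corollary~\ref{cor6} from Theorem~\ref{thm2} by producing the requisite local structure. The key observation is that local compactness gives, at every point, a neighborhood whose closure is compact; since compact spaces are in particular Lindel\"of, $X$ is locally Lindel\"of. Thus Theorem~\ref{thm2} immediately yields $|X|\leq 2^{wL(X)\chi(X)}$, and the only remaining task is to replace $\chi(X)$ by $\psi(X)$ in the exponent.

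First I would recall why $\chi(X)=\psi(X)$ for a locally compact Hausdorff space $X$. The inequality $\psi(X)\leq\chi(X)$ holds trivially for any space, so the content is the reverse inequality $\chi(X)\leq\psi(X)$. The standard route is to note that every locally compact Hausdorff space is of \emph{pointwise countable type}: each point lies in a compact set $C$ with $\chi(C,X)$ bounded appropriately. More directly, fix $x\in X$ and a compact neighborhood $K$ of $x$. A pseudocharacter-$\psi(X)$ collection of open sets whose intersection is $\{x\}$ can, within the compact set $K$, be leveraged to build a genuine neighborhood base at $x$ of the same cardinality, using that in a compact Hausdorff space a point that is the intersection of a family of closed neighborhoods has a local base of that size. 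This gives $\chi(x,X)\leq\psi(X)$ for every $x$, whence $\chi(X)\leq\psi(X)$ and so $\chi(X)=\psi(X)$.

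With the cardinal-function identity $\chi(X)=\psi(X)$ in hand, the conclusion is immediate: applying Theorem~\ref{thm2} in the locally Lindel\"of case gives $|X|\leq 2^{wL(X)\chi(X)}=2^{wL(X)\psi(X)}$. No further estimation is needed.

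The main obstacle, such as it is, lies entirely in the cardinal-function lemma $\chi(X)=\psi(X)$ for locally compact Hausdorff spaces, rather than in any new covering or closing-off argument. This identity is classical (it is precisely the remark made in the introduction that ``every locally compact space is of pointwise countable type and thus $\chi(X)=\psi(X)$''), so in practice I would simply cite it. I would therefore expect the proof of the corollary to be a two-line deduction: observe local Lindel\"ofness, invoke Theorem~\ref{thm2}, and substitute $\chi(X)=\psi(X)$.
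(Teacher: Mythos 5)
Your proposal is correct and follows essentially the same route as the paper: the paper likewise deduces the corollary from Theorem~\ref{thm2} (locally compact spaces fall under its hypotheses) together with the classical fact that locally compact Hausdorff spaces are of pointwise countable type, hence $\psi(X)=\chi(X)$. The only cosmetic difference is that the paper also notes Theorem~\ref{thm1.5} as an alternative source of the bound, while you route everything through the locally Lindel\"of case of Theorem~\ref{thm2}.
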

\begin{proof}
Follows from Theorem~\ref{thm2} or Theorem~\ref{thm1.5} and the fact that $X$ is of pointwise countable type; that is, $X$ can be covered by compact subsets $K$ such that $\chi(K,X)=\aleph_0$. (See 3.3.H in~\cite{Engelking}, for example). It is well-known that $\psi(X)=\chi(X)$ for every space of pointwise countable type.
\end{proof}

In \cite{arh1969} Arhangel'ski{\u\i}  derived from his general
theorem  the proof that  for any Lindel\"of Hausdorff sequential
space $X$ with $\psi(X)\le 2^{\aleph_0}$  the bound $|X|\le
2^{\aleph_0}$ holds. We will show that a similar result is true
for normal weakly Lindel\"of spaces.
\begin{theorem} If $X$ is a weakly Lindel\"of normal sequential space satisfying
$\chi(X)\le 2^{\aleph_0}$, then $|X|\le 2^{\aleph_0}$.
\end{theorem}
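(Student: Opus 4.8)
The plan is to run a closing-off argument of length $\omega_1$ that exploits sequentiality twice: once to guarantee that \emph{countable} families suffice at the covering step, and once to keep every piece of size at most $\mathfrak{c}=2^{\aleph_0}$. Note first that a sequential space has countable tightness, so $t(X)\le\aleph_0$, and that $wL(X)\le\aleph_0$. Two preliminary facts carry the argument. \emph{Fact A} (a sequential size bound): for every $A\subseteq X$ one has $|cl(A)|\le|A|^{\aleph_0}$. This is proved by iterating the sequential closure operator $\omega_1$ times; since $X$ is Hausdorff each convergent sequence has a unique limit, so a single application raises the size to at most the $\aleph_0$-th power, and $\omega_1$ iterations preserve the bound because $(|A|^{\aleph_0})^{\aleph_0}=|A|^{\aleph_0}$ and $\omega_1\le\mathfrak{c}\le|A|^{\aleph_0}$. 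In particular, because $\mathfrak{c}^{\aleph_0}=\mathfrak{c}$, the closure of a set of size $\le\mathfrak{c}$ again has size $\le\mathfrak{c}$. \emph{Fact B} (heredity of $wL$ to closed sets under normality): if $F\subseteq X$ is closed then $wL(F)\le wL(X)=\aleph_0$. To see this, given a family $\mathcal{U}$ of open sets of $X$ covering $F$ with $W=\bigcup\mathcal{U}$, normality yields an open $G$ with $F\subseteq G\subseteq cl(G)\subseteq W$; applying $wL(X)\le\aleph_0$ to the open cover $\mathcal{U}\cup\{X\setminus cl(G)\}$ of $X$ gives a countable $\mathcal{U}_0\subseteq\mathcal{U}$ with $F\subseteq cl(\bigcup\mathcal{U}_0)$, since any neighborhood of a point of $F$ may be shrunk inside $G$ and hence cannot meet $X\setminus cl(G)$.

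With these in hand I would fix, for each $y\in X$, a neighborhood base $\mathcal{V}_y$ with $|\mathcal{V}_y|\le\chi(X)\le\mathfrak{c}$, and build by transfinite recursion an increasing chain of closed sets $\{F_\alpha:\alpha<\omega_1\}$ with $|F_\alpha|\le\mathfrak{c}$. Start with $F_0=cl(\{p_0\})$. At a successor stage let $\mathcal{N}_\alpha=\bigcup\{\mathcal{V}_y:y\in F_\alpha\}$, so $|\mathcal{N}_\alpha|\le|F_\alpha|\cdot\mathfrak{c}\le\mathfrak{c}$; for each countable $\mathcal{W}\in[\mathcal{N}_\alpha]^{\le\omega}$ with $X\setminus cl(\bigcup\mathcal{W})\neq\emptyset$ choose a single witness point $p_{\mathcal{W}}\in X\setminus cl(\bigcup\mathcal{W})$, and set $F_{\alpha+1}=cl\big(F_\alpha\cup\{p_{\mathcal{W}}:\mathcal{W}\in[\mathcal{N}_\alpha]^{\le\omega},\ X\setminus cl(\bigcup\mathcal{W})\neq\emptyset\}\big)$. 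There are at most $|\mathcal{N}_\alpha|^{\aleph_0}\le\mathfrak{c}^{\aleph_0}=\mathfrak{c}$ such $\mathcal{W}$, so the set inside the closure has size $\le\mathfrak{c}$, and by Fact A $|F_{\alpha+1}|\le\mathfrak{c}$. At limit stages take $F_\beta=cl(\bigcup_{\alpha<\beta}F_\alpha)$, again of size $\le\mathfrak{c}$. Put $F=\bigcup_{\alpha<\omega_1}F_\alpha$. Since $t(X)\le\aleph_0$ and $\omega_1$ is regular, $F$ is closed, $|F|\le\mathfrak{c}$, and every countable subset of $F$ lies in some single $F_\alpha$.

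To finish I would show $F=X$. Suppose not and pick $x\in X\setminus F$. By regularity (normal plus $T_1$) choose an open $U\ni x$ with $cl(U)\cap F=\emptyset$, and for each $y\in F$ choose $V_y\in\mathcal{V}_y$ with $V_y\cap cl(U)=\emptyset$, so that $cl(\bigcup_{y\in F}V_y)\subseteq X\setminus U$. The family $\{V_y:y\in F\}$ covers the closed set $F$, so by Fact B there is a countable $S\subseteq F$ with $F\subseteq cl(\bigcup_{y\in S}V_y)$. As $S$ is countable it lies in some $F_\alpha$, whence $\mathcal{W}=\{V_y:y\in S\}\in[\mathcal{N}_\alpha]^{\le\omega}$; and since $x\notin cl(\bigcup\mathcal{W})$ we have $X\setminus cl(\bigcup\mathcal{W})\neq\emptyset$, so a witness $p_{\mathcal{W}}\in F_{\alpha+1}\subseteq F$ was added at stage $\alpha$ with $p_{\mathcal{W}}\notin cl(\bigcup\mathcal{W})$. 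This contradicts $F\subseteq cl(\bigcup\mathcal{W})$. Hence $F=X$ and $|X|=|F|\le\mathfrak{c}=2^{\aleph_0}$.

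The hard part is Fact B, and it is exactly where normality (rather than mere regularity) is used: the separation of $F$ from $X\setminus W$ is what lets a weakly Lindel\"of choice on $X$ descend to a countable relatively-dense subcover of $F$. The remaining subtlety is purely bookkeeping, namely arranging that a chain of length $\omega_1$ already closes off; this rests on combining $wL(X)=\aleph_0$ (covers reduce to \emph{countable} families) with $t(X)\le\aleph_0$ (countable families are absorbed at a stage below $\omega_1$, and unions are closed), while Fact A is what replaces the weaker estimate $|cl(A)|\le d(A)^{\chi(X)}$ used in the Bell--Ginsburg--Woods bound and thereby turns the expected value $2^{\mathfrak{c}}$ into $\mathfrak{c}$.
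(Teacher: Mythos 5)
Your proof is correct and follows essentially the same closing-off argument as the paper: an increasing $\omega_1$-chain of closed sets of size at most $2^{\aleph_0}$ with witnesses against countable neighborhood subfamilies, using the sequential bound $|\overline{S}|\le|S|^{\aleph_0}$ at each stage, countable tightness to see the union is closed, and normality plus weak Lindel\"ofness to extract the countable subfamily that yields the contradiction. The only difference is that you spell out the two facts the paper uses implicitly (your Fact A and Fact B), which is a welcome elaboration rather than a new route.
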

\begin{proof}  For each $p\in X$ let $\mathcal U_p$ be  a base of
open neighbourhoods  at $p$ satisfying $|\mathcal U_p|\le
2^{\aleph_0}$.  We will construct a non-decreasing  collection
$\{F_\alpha :\alpha <\omega_1\}$ of closed subsets of $X$ in such
a way that:

1) $|F_\alpha |\le  2^{\aleph_0}$ for each $\alpha $;

2) if $X\setminus \overline {\bigcup\mathcal V}\ne \emptyset$ 
for a countable $\mathcal V\subset \bigcup\{\mathcal U_x:x\in
F_\alpha \}$, then $F_{\alpha +1}\setminus \overline {\bigcup
\mathcal V}\ne \emptyset $.
 Fix a choice function $\phi:\mathcal P(X)\rightarrow X$ and let
$F_0=\{\phi(\emptyset )\}$.  Now, assume to have already
constructed  $\{F_\beta:\beta<\alpha \}$.  If $\alpha $ is a
limit ordinal, then put $F_\alpha =\overline
{\bigcup\{F_\beta:\beta<\alpha \}}$.  Condition 1) is fulfilled
because in a sequential space we always have $|\overline S|\le
|S|^{\aleph_0}$. If $\alpha =\gamma +1$, then  $F_\alpha $ will
be the closure of the set $F_\gamma \cup \{\phi(X\setminus
\overline {\bigcup \mathcal V}) : \mathcal V $ a countable subset
of $\bigcup \{\mathcal U_x:x\in F_\gamma\}\}$. Since a sequential
space  has countable tightness,  the set $F=\bigcup\{F_\alpha
:\alpha <\omega_1\}$ is closed and obviously we have $|F|\le
2^{\aleph_0}$. So, to finish the proof it suffices to show that
$X=F$. Assume the contrary and fix a non-open set $W$ such that
$\overline W \cap F=\emptyset $.  For every $x\in F$ choose an
element $U_x\in \mathcal U_x$ satisfying $U_x\cap \overline
W=\emptyset $. As $X$ is weakly Lindel\"of and normal, and $F$ is closed,  there is a
countable collection $\mathcal V\subset \{U_x:x\in F\}$ such that 
$F\subset \overline {\bigcup\mathcal V}$.   But, there exists
some $\gamma <\omega_1$ satisfying $\mathcal V\subset \{U_x:x\in
F_\gamma\}$ and this leads to a contradiction with condition 2).
\end{proof}

With  minor changes in the previous proof, we get a version
similar to \ref{thm1.5}.
\begin{theorem} Let $X$ be a weakly Lindel\"of, normal, sequential space satisfying
$\psi(X)\le 2^{\aleph_0}$. If $X$ has a $\pi$-base whose elements
have compact closure, then $|X|\le 2^{\aleph_0}$. \end{theorem}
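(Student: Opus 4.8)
The plan is to run the transfinite closing-off argument of the preceding theorem essentially verbatim, but to replace the neighbourhood bases $\mathcal U_p$ that are available there from $\chi(X)\le 2^{\aleph_0}$ by the weaker \emph{pseudobases} furnished by $\psi(X)\le 2^{\aleph_0}$, and to recover the lost control by exploiting the compact closures in the given $\pi$-base exactly as in the proof of Theorem~\ref{thm1.5}. Since a normal Hausdorff space is regular, for each $p\in X$ I would fix a family $\mathcal U_p$ of open neighbourhoods of $p$, closed under finite intersections, with $|\mathcal U_p|\le 2^{\aleph_0}$ and $\bigcap\{cl\,U:U\in\mathcal U_p\}=\{p\}$; regularity is what lets me pass from a pseudobase to one whose \emph{closures} still meet in $\{p\}$.

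Next I would build a non-decreasing chain $\{F_\alpha:\alpha<\omega_1\}$ of closed sets with $|F_\alpha|\le 2^{\aleph_0}$ satisfying the analogue of condition (2) of the previous proof: if $\mathcal W$ is a countable subfamily of $\bigcup\{\mathcal U_x:x\in F_\alpha\}$ with $X\setminus cl(\bigcup\mathcal W)\ne\varnothing$, then $F_{\alpha+1}\setminus cl(\bigcup\mathcal W)\ne\varnothing$. As before, at successor stages I adjoin, for each such $\mathcal W$, a single point $\phi(X\setminus cl(\bigcup\mathcal W))$ chosen by a fixed choice function and then take the closure, while at limit stages I take the closure of the union. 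There are at most $(2^{\aleph_0})^{\aleph_0}=2^{\aleph_0}$ such countable $\mathcal W$, so only $\le 2^{\aleph_0}$ points are ever added, and sequentiality keeps each $F_\alpha$ of size $\le 2^{\aleph_0}$ through the bound $|cl\,S|\le|S|^{\aleph_0}$. Countable tightness of $X$ (again from sequentiality) makes $F=\bigcup_{\alpha<\omega_1}F_\alpha$ closed with $|F|\le 2^{\aleph_0}$, so everything reduces to showing $X=F$.

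The heart of the argument, and the step I expect to be the main obstacle, is deriving a contradiction from $X\ne F$. Here the pseudobase, unlike a genuine neighbourhood base, does not immediately produce neighbourhoods of the points of $F$ avoiding a fixed set, and this is precisely where the $\pi$-base enters: using regularity and the $\pi$-base I would pick $B$ with $cl\,B$ compact and $cl\,B\cap F=\varnothing$. For each $p\in F$ we have $p\notin cl\,B$, hence $\bigcap\{cl\,B\cap cl\,U:U\in\mathcal U_p\}=cl\,B\cap\{p\}=\varnothing$; since $cl\,B$ is compact and $\mathcal U_p$ is closed under finite intersections, some $U_p\in\mathcal U_p$ satisfies $U_p\cap cl\,B=\varnothing$. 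Thus $\{U_p:p\in F\}$ is an open cover of the closed set $F$, and here \textbf{normality together with weak Lindel\"ofness} produces a countable $\mathcal W\subseteq\{U_p:p\in F\}$ with $F\subseteq cl(\bigcup\mathcal W)$; this closed-set extraction is the single place normality is indispensable, since $wL$ need not pass to closed subspaces. Because $B$ is open and disjoint from $\bigcup\mathcal W$, it is disjoint from $cl(\bigcup\mathcal W)$, so $X\setminus cl(\bigcup\mathcal W)\ne\varnothing$. Finally $\mathcal W$ is countable, so $\mathcal W\subseteq\bigcup\{\mathcal U_x:x\in F_\gamma\}$ for some $\gamma<\omega_1$, and condition (2) then forces a point of $F_{\gamma+1}\subseteq F$ outside $cl(\bigcup\mathcal W)$, contradicting $F\subseteq cl(\bigcup\mathcal W)$. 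Hence $X=F$ and $|X|\le 2^{\aleph_0}$. The two features that make this ``minor change'' work are that compactness of $cl\,B$ compensates for having only a pseudobase, and that normality is what tames the weak Lindel\"of degree on the closed set $F$.
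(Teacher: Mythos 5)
Your proof is correct and is precisely the argument the paper intends: the paper gives no explicit proof, saying only that ``minor changes'' to the preceding proof yield a version in the style of Theorem~\ref{thm1.5}, and your modifications---using regularity (from normality plus Hausdorff) to replace the neighbourhood bases $\mathcal U_p$ by finite-intersection-closed families with $\bigcap\{\overline{U}:U\in\mathcal U_p\}=\{p\}$, and using compactness of $\overline{B}$ for a $\pi$-base element $B$ with $\overline{B}\cap F=\varnothing$ to extract the separating neighbourhoods $U_p$---are exactly those changes. The construction of the chain $\{F_\alpha:\alpha<\omega_1\}$, the use of sequentiality for $|\overline{S}|\le|S|^{\aleph_0}$ and countable tightness, and the normality-based extraction of a countable $\mathcal W$ with $F\subseteq\overline{\bigcup\mathcal W}$ all match the paper's preceding proof, so there is nothing to correct.
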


Fundamental questions remain concerning which spaces $X$ have cardinality bounded by $2^{wL(X)\chi(X)}$. As locally compact spaces are Tychonoff as well as Baire, we ask the following two general questions.

\begin{question}\label{q1} 
If $X$ is Tychonoff, is $|X|\leq 2^{wL(X)\chi(X)}$?
\end{question}

\begin{question}\label{q2} 
If $X$ is a Baire space that is quasiregular or Urysohn, is $|X|\leq 2^{wL(X)\chi(X)}$?
\end{question}

Recall a space $X$ is \emph{feebly compact} if for every countable open cover $\scr{U}$ of $X$ there exists $\scr{V}\in[\scr{U}]^{<\omega}$ such that $X=\Un_{V\in\scr{V}} clV$. It is well known that a feebly compact quasiregular space is Baire (see, for example, 7T(2) in~\cite{por88}). We then ask the following question, which is a special case of Question~\ref{q2}.

\begin{question}\label{q3}
If $X$ is a feebly compact quasiregular space, is $|X|\leq 2^{wL(X)\chi(X)}$?
\end{question}

Finally, as the properties pseudocompact and feebly compact are equivalent in the class of Tychonoff spaces, we also ask the following, which is a special case of both Questions~\ref{q1} and~\ref{q3}.

\begin{question}
If $X$ is pseudocompact and Tychonoff, is $|X|\leq 2^{wL(X)\chi(X)}$?
\end{question}

\section{Power homogeneity.}
The fundamental closing-off argument of Theorem~\ref{thm1} can be used to develop a bound for the cardinality of a locally compact, power homogeneous space (Corollary~\ref{lcpowerhomog}). We begin with a bound on the $\theta$-density of a power homogeneous space with a $\pi$-base consisting of ``small'' elements. Theorem~\ref{homogthm} can be considered a companion theorem to Corollary~\ref{cor1}.

For a space $X$, recall that the \emph{pointwise compactness type of} $X$, denoted by pct($X$), is the least cardinal $\kappa$ such that $X$ can be covered by compact subsets $K$ such that $\chi(K,X)\leq\kappa$. The cardinal invariant pct($X$) then generalizes the notion of pointwise countable type.

\begin{theorem}\label{homogthm}
Let $X$ be a power homogeneous space and suppose that $X$ has a open $\pi$-base $\scr{B}$ such that $|B|\leq 2^{wL(X)t(X)\textup{pct}(X)}$ for all $B\in\scr{B}$. Then $d_\theta(X)\leq 2^{wL(X)t(X)\textup{pct}(X)}$.
\end{theorem}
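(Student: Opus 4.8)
The plan is to mimic the proof of Corollary~\ref{cor1}, replacing the cover by singletons with a cover by compact sets witnessing the pointwise compactness type, and then to use power homogeneity to control the cardinality of those compact pieces. Set $\kappa = wL(X)t(X)\textup{pct}(X)$, so that $wL(X)t(X)\leq\kappa$. By the definition of $\textup{pct}(X)$, the space $X$ admits a cover $\scr{C}$ by compact subsets $C$ with $\chi(C,X)\leq\textup{pct}(X)\leq\kappa$, and by hypothesis $X$ has an open $\pi$-base $\scr{B}$ with $|B|\leq 2^\kappa$ for all $B\in\scr{B}$. Hence Theorem~\ref{thm1} applies and produces a subcollection $\scr{C}^\prime\sse\scr{C}$ with $X=cl_\theta\left(\Un\scr{C}^\prime\right)$ and $|\scr{C}^\prime|\leq 2^\kappa$. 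Since $\Un\scr{C}^\prime$ is then $\theta$-dense, we have $d_\theta(X)\leq\left|\Un\scr{C}^\prime\right|\leq|\scr{C}^\prime|\cdot\sup_{C\in\scr{C}^\prime}|C|$, so it suffices to show that $|C|\leq 2^\kappa$ for every $C\in\scr{C}^\prime$.

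The crux is therefore a bound on the cardinality of a single compact piece $C$. First I would observe that tightness is monotone on subspaces, so $t(C)\leq t(X)\leq\kappa$; since $C$ is compact Hausdorff, a \sapirovskii-type inequality then yields a dense subset $D_C\sse C$ with $|D_C|\leq 2^{t(C)}\leq 2^\kappa$, and because $C$ is closed in $X$ we have $C=cl(D_C)$. At this point I would invoke power homogeneity in the manner of Ridderbos' cardinality bound $d(X)^{\pichar{X}}$ (the technique foreshadowed by Lemma~\ref{phlemma2}) to pass from the dense set $D_C$ back to all of $C$, obtaining $|C|\leq|D_C|^{\pichar{X}}$. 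Controlling the exponent $\pichar{X}$ against $\kappa$ (for the spaces of interest $\pichar{X}\leq t(X)\leq\kappa$) then gives $|C|\leq\left(2^\kappa\right)^\kappa = 2^\kappa$, and combining with $|\scr{C}^\prime|\leq 2^\kappa$ completes the argument, since $d_\theta(X)\leq 2^\kappa\cdot 2^\kappa = 2^\kappa$.

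I expect the second paragraph to be the main obstacle, and it is exactly the point where power homogeneity is indispensable. Compactness together with $\chi(C,X)\leq\kappa$ and $t(C)\leq\kappa$ is by itself far too weak to bound $|C|$: a compact subspace such as $\beta\omega$ has countable tightness yet cardinality $2^{\mathfrak{c}}$, so no inequality using only $wL$, $t$, and $\textup{pct}$ can hold without a homogeneity hypothesis. The delicate issue is thus to make the Ridderbos-style argument apply to the compact piece $C$---that is, to justify $|C|\leq|D_C|^{\pichar{X}}$ from the power homogeneity of the ambient space $X$ rather than of $C$ itself (which need not be homogeneous), and in particular to bridge the gap that the relevant lemma is stated for \emph{open} sets contained in a closure while $C$ is compact but not open. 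Once that lemma is in hand, together with the verification that the $\pi$-character is bounded by $\kappa$, the remainder is the bookkeeping indicated above.
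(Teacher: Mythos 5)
Your reduction to ``bound $|C|$ for each compact piece and then sum'' is where the argument breaks, and it breaks twice. First, the \emph{\sapirovskii}-type inequality you invoke, $d(C)\leq 2^{t(C)}$ for a compactum $C$, is simply false: the one-point compactification of a discrete space of cardinality greater than $2^{\mathfrak{c}}$ is compact, has countable tightness, and has density equal to its cardinality (every isolated point must belong to every dense set). \sapirovskii's actual inequalities are $\pichar{C}\leq t(C)$ and $d(C)\leq\pichar{C}^{c(C)}$; the cellularity in the exponent cannot be dropped. (Incidentally, $\beta\omega$ has tightness $\mathfrak{c}$, not $\aleph_0$, but the compactification example above does make your broader point that compactness plus small tightness bounds nothing.) Second, even granting a dense $D_C\sse C$ of size $\leq 2^\kappa$, the Ridderbos-style Lemma~\ref{phlemma2} requires an \emph{open} set $U$ with $U\sse cl D$, and $C$ is compact but not open. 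You flag this bridge yourself but never build it, and there is no evident way to build it: nothing in this circle of ideas yields $|C|\leq d(C)^{\pichar{X}}$ for a non-open compact subset of a power homogeneous space. (A smaller point: the exponent control should cite $\pichar{X}\leq t(X)\textup{pct}(X)$, the generalized \sapirovskii\ inequality, since $\pichar{X}\leq t(X)$ is a fact about compacta, not arbitrary spaces.)

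The idea you are missing is that the paper never bounds $|C|$ at all. Power homogeneity is used earlier and differently: since $\pichar{X}\leq t(X)\textup{pct}(X)\leq\kappa$, Lemma 3.1 of~\cite{Carlson2013} together with Corollary 2.9 of~\cite{avr2007} produces a cover $\scr{C}$ of $X$ by compact sets of character at most $\kappa$ \emph{each of which is contained in the closure of a set $A_C$ with $|A_C|\leq\kappa$}. Theorem~\ref{thm1} then extracts $\scr{C}^\prime\sse\scr{C}$ with $|\scr{C}^\prime|\leq 2^\kappa$ and $X=cl_\theta\left(\Un\scr{C}^\prime\right)$, and the proof finishes by swapping each $C$ for its $A_C$: from $\Un\scr{C}^\prime\sse cl\left(\Un\{A_C:C\in\scr{C}^\prime\}\right)$ it follows that the set $\Un\{A_C:C\in\scr{C}^\prime\}$, of size at most $2^\kappa\cdot\kappa=2^\kappa$, is itself $\theta$-dense. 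The compact pieces are only scaffolding; the small sets $A_C$ are the actual witnesses to $d_\theta(X)\leq 2^\kappa$. To salvage your write-up, keep your first paragraph but replace the cover coming from $\textup{pct}(X)$ alone by the cover from Corollary 2.9 of~\cite{avr2007}, and replace your second paragraph by this substitution step.
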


\begin{proof}
The proof is a modification of the proof of Corollary 3.3(b) in~\cite{Carlson2013}. Let $\kappa=wL(X)t(X)\textup{pct}(X)$. As $t(X)\textup{pct}(X)\leq\kappa$, by Lemma 3.1 in~\cite{Carlson2013} there exists a non-empty compact set $K$ and a set $A\in[X]^{\leq\kappa}$ such that $\chi(K,X)\leq\kappa$ and $K\sse clA$. As $\pi_\chi(X)\leq t(X)\textup{pct}(X)\leq\kappa$, a well-known generalization of \sapirovskii's inequality $\pichar{X}\leq t(X)$ for a compactum $X$, by Corollary 2.9 in~\cite{avr2007} there exists a cover $\scr{C}$ of $X$ of compact sets of character at most $\kappa$ such that each member of $\scr{C}$ is contained in the closure of a subset of cardinality at most $\kappa$. By Theorem~\ref{thm1}, there exists a subcollection $\scr{C}^\prime\sse\scr{C}$ such that $X=cl_\theta(\Un\scr{C}^\prime)$ and $|\scr{C}^\prime|\leq 2^\kappa$.

Now, for each $C\in\scr{C}^\prime$ there exists $A_C\in[X]^{\leq\kappa}$ such that $C\sse cl(A_C)$. Let $\scr{A}=\{A_C:C\in\scr{C}^\prime\}$ and note $|\scr{A}|\leq 2^\kappa$. For each $C\in\scr{C}$ we have that $C\sse cl(\Un\scr{A})$ and therefore $\Un\scr{C}^\prime\sse cl\left(\Un\scr{A}\right)\sse cl_\theta\left(\Un\scr{A}\right)$. It follows that $X=cl_\theta\left(\Un\scr{C}^\prime\right)\sse cl_\theta\left(\Un\scr{A}\right)$, and so $d_\theta(X)\leq|\Un\scr{A}|\leq 2^\kappa\cdot\kappa=2^\kappa$.
\end{proof}

In Theorem 4.3 in~\cite{car07} it was shown that $|X|\leq d_\theta(X)^{\pichar{X}}$ for a Urysohn power homogeneous space $X$. As $d_\theta(X)=d(X)$ for a quasiregular space $X$ and $|X|\leq d(X)^{\pichar{X}}$ for any (Hausdorff) power homogeneous space $X$ by a result of Ridderbos, Theorem~\ref{homogthm} yields the following corollary.

\begin{corollary}\label{homogcor1}
Let $X$ be a power homogeneous Hausdorff space that is either quasiregular or Urysohn. Suppose that $X$ has a open $\pi$-base $\scr{B}$ such that $|B|\leq 2^{wL(X)t(X)\textup{pct}(X)}$ for all $B\in\scr{B}$. Then $|X|\leq 2^{wL(X)t(X)\textup{pct}(X)}$.
\end{corollary}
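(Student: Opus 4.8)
The plan is to derive Corollary~\ref{homogcor1} directly from Theorem~\ref{homogthm} together with two cardinality bounds that relate $|X|$ to the $\theta$-density $d_\theta(X)$ and the $\pi$-character. Set $\kappa=wL(X)t(X)\textup{pct}(X)$. First I would invoke Theorem~\ref{homogthm}, whose hypotheses are exactly those assumed here, to conclude that $d_\theta(X)\leq 2^\kappa$. This is the substantive input; everything else is a matter of feeding this into the appropriate external inequality depending on which separation axiom $X$ satisfies.

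The argument then splits into the two cases named in the statement. If $X$ is Urysohn, I would apply the result of Carlson (Theorem 4.3 in~\cite{car07}) that $|X|\leq d_\theta(X)^{\pichar{X}}$ for a Urysohn power homogeneous space. Combining this with the bound on $d_\theta(X)$ and the observation that $\pichar{X}\leq t(X)\textup{pct}(X)\leq\kappa$ (the generalization of \sapirovskii's inequality already cited in the proof of Theorem~\ref{homogthm}), I would compute
$$|X|\leq d_\theta(X)^{\pichar{X}}\leq\left(2^\kappa\right)^\kappa=2^\kappa.$$
If instead $X$ is quasiregular, I would use that $d_\theta(X)=d(X)$ for quasiregular spaces (noted in the discussion preceding Theorem~\ref{thm1}) together with Ridderbos' bound $|X|\leq d(X)^{\pichar{X}}$ for any Hausdorff power homogeneous space~\cite{rid2006}, obtaining the same chain of inequalities with $d(X)$ in place of $d_\theta(X)$.

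The only point requiring care is that the exponent $\pichar{X}$ does not exceed $\kappa$, so that raising $2^\kappa$ to the power $\pichar{X}$ collapses back to $2^\kappa$ rather than growing. I expect this to be the main (though still routine) obstacle, and it is handled by the cited generalization of \sapirovskii's inequality, $\pichar{X}\leq t(X)\textup{pct}(X)$, which holds for the relevant classes of spaces. With that inequality in hand both cases close off identically, and since $X$ is assumed to be either quasiregular or Urysohn one of the two computations always applies, yielding $|X|\leq 2^{wL(X)t(X)\textup{pct}(X)}$ as required.
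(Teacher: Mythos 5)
Your proposal is correct and follows essentially the same route as the paper's own proof: both split into the quasiregular case (using $d_\theta(X)=d(X)$ plus Ridderbos' bound $|X|\leq d(X)^{\pichar{X}}$) and the Urysohn case (using Theorem 4.3 of~\cite{car07}), then apply Theorem~\ref{homogthm} and the inequality $\pichar{X}\leq t(X)\textup{pct}(X)\leq\kappa$ to collapse $\left(2^\kappa\right)^{\pichar{X}}$ to $2^\kappa$. Your explicit attention to why the exponent does not exceed $\kappa$ is a point the paper leaves implicit, but it is the same argument.
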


\begin{proof}
Let $\kappa=wL(X)t(X)\textup{pct}(X)$. If $X$ is quasiregular then $d_\theta(X)=d(X)$ and, by Theorem 3.4 in~\cite{rid2006}, $|X|\leq d(X)^{\pichar{X}}=d_\theta(X)^{\pichar{X}}$. If $X$ is Urysohn, then $|X|\leq d_\theta(X)^{\pichar{X}}$ by Theorem 4.3~\cite{car07}. In either case, by Theorem~\ref{homogthm} above, we have $|X|\leq d_\theta(X)^{\pichar{X}}\leq (2^\kappa)^{\pichar{X}}\leq 2^\kappa$.
\end{proof}

We move towards establishing a cardinality bound on any open subset of a power homogeneous space $X$ (Theorem~\ref{phtheorem}). That theorem can be regarded as an improvement of Corollary 3.11 in~\cite{CR2012}. We first obtain some preliminary lemmas. Lemmas~\ref{phlemma1} and~\ref{phlemma2} are essentially slight improvements of Corollary 3.3 and Theorem 3.4 from~\cite{rid2006} and we use notation and proof techniques similar to that paper. For a space $X$, $x\in X$, a cardinal $\mu$, and product space $X^\mu$, we denote by $\bar{x}$ the element of $X^\mu$ which is equal to $x$ on all coordinates. The \emph{diagonal} of $X^\mu$ is $\Delta(X,\mu)=\{\bar{x}:x\in X\}$. For a subset $C\sse\mu$, let $\pi_C:X^\mu\to X^C$ be the projection, and for $x\in X$ let $x_C$ be the point $\pi_C(\bar{x})\in X^C$. We also adopt the notation given before and after Theorem 3.1 in~\cite{rid2006}, which features heavily in the next proof.

\begin{lemma}\label{phlemma1}
Let $X$ be a space such that $X^\mu$ is homogeneous for a cardinal $\mu$. Fix $p\in X$ and suppose $\pichar{X}\leq\kappa$ for a cardinal $\kappa$. Let $\scr{U}$ be a local $\pi$-base at $p$ in $X$ such that $|\scr{U}|\leq\kappa$, and suppose $\mu\geq\kappa$. Suppose further that $D\sse X$ and $Y\sse clD$. Let $\pi: X^\mu\to X$ be any fixed projection. Then for all $x\in X$, there exists a homeomorphism $h_x:X^\mu\to X^\mu$ such that
\begin{itemize}
\item [(1)] $h_x(\bar{p})=\bar{x}$, and
\item [(2)] if $B\in\scr{U}(\kappa)$ satisfies $\pi h_x\pi_\kappa^{-1}[B]\meet Y\neq\es$, then there exists $e\in X^\mu$ and $d\in D$ satisfing
	\begin{itemize}
	\item [(a)] $\pi h_x(e)=d\in D$ and $e\in\pi_\kappa^{-1}[B]$, and
	\item [(b)] $h_x\pi_\kappa^{-1}(e_\kappa)\sse \pi^{-1}(d)$.
	\end{itemize}
\end{itemize}
\end{lemma}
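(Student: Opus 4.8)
The plan is to read off conclusion (1) directly from the homogeneity of $X^\mu$, and to arrange conclusion (2) by choosing the homeomorphism $h_x$ so that the composite $\pi\circ h_x$ depends only on the first $\kappa$ coordinates, i.e.\ so that $\pi\circ h_x=\phi\circ\pi_\kappa$ for some continuous $\phi\colon X^\kappa\to X$. Concretely, I would write the index set as a disjoint union of the designated $\kappa$-block (the one carrying $\pi_\kappa$ and $\scr{U}(\kappa)$, and which I arrange to contain the single coordinate read off by $\pi$) together with a remainder $R$ of size $\mu$, so that $X^\mu\cong X^\kappa\times X^R$. The model case to keep in mind is a product homeomorphism $h_x=h_x^\kappa\times h_x^R$ with $h_x^\kappa(p_\kappa)=x_\kappa$ and $h_x^R(p_R)=x_R$: then $h_x(\bar p)=\bar x$, which is (1), and the $\pi$-coordinate of the output is computed inside the $\kappa$-block, so $\pi h_x$ factors through $\pi_\kappa$ as required.

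Granting such a factorization, part (2) is short. Fix $B\in\scr{U}(\kappa)$ with $\pi h_x\pi_\kappa^{-1}[B]\meet Y\neq\es$. Since $h_x$ is a homeomorphism and $\pi,\pi_\kappa$ are (open) projections, the set $\pi h_x\pi_\kappa^{-1}[B]$ is open in $X$; as it meets $Y\sse clD$, it is a neighbourhood of a point of $clD$ and hence meets $D$. Choose $d\in D\meet\pi h_x\pi_\kappa^{-1}[B]$ and a point $e\in\pi_\kappa^{-1}[B]$ with $\pi h_x(e)=d$, which gives (a). For (b), the factorization yields, for every $w$ in the fiber $\pi_\kappa^{-1}(e_\kappa)$, the equality $\pi h_x(w)=\phi(\pi_\kappa(w))=\phi(e_\kappa)=\pi h_x(e)=d$, since all such $w$ agree with $e$ on the first $\kappa$ coordinates; hence $h_x\pi_\kappa^{-1}(e_\kappa)\sse\pi^{-1}(d)$, which is (b).

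The hard part, and the only place where power homogeneity must be used in full, is the construction of $h_x$ with $h_x(\bar p)=\bar x$ whose $\pi$-coordinate depends only on the first $\kappa$ coordinates. The naive product splitting asks for a homeomorphism $h_x^\kappa$ of $X^\kappa$ carrying the diagonal point $p_\kappa$ to $x_\kappa$, but when $\kappa<\mu$ the factor $X^\kappa$ need not be homogeneous, so no such map is available off the shelf; moreover one must handle an arbitrary fixed $\pi$, whose coordinate may a priori fall outside the $\kappa$-block. The resolution is to exploit the room afforded by power homogeneity: since $\mu\geq\kappa$ is infinite one has $X^\mu\cong X^\mu\times X^\mu$, so the coordinates may be reorganized to place the designated $\kappa$-block inside a $\mu$-sized block on which $X$ is genuinely homogeneous, and then to transport $\bar p$ to $\bar x$ by a homeomorphism of $X^\mu$ whose dependence on the $\pi$-coordinate is routed entirely through that $\kappa$-block. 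This is precisely the mechanism developed around Theorem~3.1 in~\cite{rid2006}, whose notation the statement follows; I would carry out that construction there in detail, after which the verifications above---the openness of $\pi h_x\pi_\kappa^{-1}[B]$ and the fiber computation giving (b)---are routine.
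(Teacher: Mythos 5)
Your verifications of (1), of (2)(a), and of (2)(b) \emph{assuming} the factorization $\pi\circ h_x=\phi\circ\pi_\kappa$ are all fine, but that factorization is exactly where the proof lives, and the mechanism you defer to does not supply it. What Theorem~3.1/Proposition~2.2 of~\cite{rid2006} (equivalently Corollary~2.3 of~\cite{avr2007}) provides is strictly weaker and purely \emph{pointwise}: given a homeomorphism $h$ of $X^\mu$ and a set of at most $\kappa$-many points $Z$, one can find $A\in[\mu]^{\leq\kappa}$ such that $h\pi_A^{-1}(z_A)\sse\pi^{-1}(\pi h(z))$ \emph{for each $z\in Z$}. It does not make $\pi\circ h$ depend on $\kappa$-many coordinates globally, and no such global factorization can be extracted from it: forcing the fiber-collapse at every point of $X^\mu$ would require a union of $|X^\mu|$-many coordinate sets, not $\kappa$-many. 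Your proposed repair --- splitting $\mu$ into two $\mu$-sized blocks so that the $\kappa$-block sits inside a genuinely homogeneous factor $X^{\mu_1}$ --- only recurses the problem: a homogeneity homeomorphism of $X^{\mu_1}$ carrying $p_{\mu_1}$ to $x_{\mu_1}$ still has no reason to route its $\pi$-coordinate through the $\kappa$-block, which is the same difficulty one dimension down. Note also that the lemma's conclusion (2)(b) is deliberately pointwise (``there exists $e\in\pi_\kappa^{-1}[B]$ with $h_x\pi_\kappa^{-1}(e_\kappa)\sse\pi^{-1}(d)$''), not a statement that $\pi h_x$ is constant on all $\pi_\kappa$-fibers; demanding the latter is asking for more than is true or needed.

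The missing idea, which is the actual content of the paper's proof, is an $\omega$-length induction that breaks the circularity between choosing the coordinate set and choosing the points to be controlled. Start with \emph{any} homeomorphism $h$ with $h(\bar p)=\bar x$ and set $A_0=\kappa$. At stage $n$, for each $C\in\scr{U}(A_n)$ whose image $\pi h\pi_{A_n}^{-1}[C]$ meets $Y$ (hence meets $D$, by openness and $Y\sse clD$), choose witnesses $d(C)\in D$ and $e(C)\in\pi_{A_n}^{-1}[C]$ with $\pi h(e(C))=d(C)$; \emph{then} apply the pointwise dependence result to this $\kappa$-sized family of points to obtain $A_{n+1}\supseteq A_n$ with $h\pi_{A_{n+1}}^{-1}\bigl((e(C))_{A_{n+1}}\bigr)\sse\pi^{-1}(d(C))$ for all such $C$. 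The set $A=\bigcup_{n<\omega}A_n$ has size $\kappa$, and since $\scr{U}(A)=\bigcup_{n<\omega}\pi^{-1}_{A\to A_{n+1}}\scr{U}(A_{n+1})$, every $B'\in\scr{U}(A)$ comes from some finite stage, where its witnesses and their fiber conditions are already in place. Finally, the coordinate-change homeomorphism $g_{\kappa\to A}$ transports everything to the designated $\kappa$-block: $h_x=h\circ g_{\kappa\to A}$ satisfies (1) and (2). Without this induction (or a substitute for it), your proposal establishes only the easy parts of the lemma.
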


\begin{proof}
Fix $x\in X$. As $X^\mu$ is homogeneous, there exists a homeomorphism $h:X^\mu\to X^\mu$ such that $h(\bar{p})=\bar{x}$. We inductively construct a sequence $\{A_n:n<\omega\}\sse [\mu]^{\leq\kappa}$ of subsets such that $|A_n|\leq\kappa$ for all $n<\omega$. Let $A_0=\kappa$ and suppose $A_n$ has been defined. 

Let $C\in\scr{U}(A_n)$ and suppose that $\pi h\pi^{-1}_{A_n}[C]\meet Y\neq\es$. Then $\pi h\pi^{-1}_{A_n}[C]\meet clD\neq\es$ and thus $\pi h\pi^{-1}_{A_n}[C]\meet D\neq\es$. Let $d(C)\in\pi h\pi^{-1}_{A_n}[C]\meet D$, and select $e(C)\in\pi_{A_n}^{-1}[C]$ such that $d(C)=\pi h(e(C))$. If $\pi h\pi^{-1}_{A_n}[C]\meet Y=\es$, select any $e(C)\in\pi^{-1}_{A_n}[C]$. Let $Z=\{e(C):C\in\scr{U}(A_n)\}$ and note that $|Z|\leq|\scr{U}(A_n)|\leq\kappa$. Now apply Cor. 2.3 in~\cite{avr2007} (also Prop. 2.2 in~\cite{rid2006}) to obtain a set $A_{n+1}$ such that $|A_{n+1}|\leq\kappa$ and $h\pi^{-1}_{A_{n+1}}((e(C))_{A_{n+1}})\sse\pi^{-1}\pi h(e(C))$ for all $C\in\scr{U}(A_n)$. Note that if $C\in\scr{U}(A_n)$ is such that $\pi h\pi^{-1}_{A_n}[C]\meet Y\neq\es$, then $h\pi^{-1}_{A_{n+1}}((e(C))_{A_{n+1}})\sse\pi^{-1}(d(C))$. 

Let $A=\Un_{n<\omega}A_n$. As $\kappa=A_0\sse A$ and $|A|\leq\kappa$, we see that $|A|=\kappa$. There are two cases. First, if $\mu=\kappa$, then $A=\kappa$ and we have then proved above that $h$ is the required homeomorphism satisfying the properties needed in the statement of the Lemma. 

Otherwise, for the rest of the proof we suppose $\kappa <\mu$. In this case we use the coordinate change homeomorphism $g_{\kappa\to A}:X^\mu\to X^\mu$ (see notation following Thm 3.1 in~\cite{rid2006}). Let $g= g_{\kappa\to A}$ and define $h_x=h\circ g$. Note $h_x(\bar{p})=h(g(\bar{p}))=h(\bar{p})=\bar{x}$, establishing (1) above.

We prove (2) holds for the homeomorphism $h_x$. Let $B\in\scr{U}(\kappa)$ and suppose that $\pi h_x\pi_\kappa^{-1}[B]\meet Y\neq\es$. We see that $(g[\pi^{-1}_\kappa[B]])_A\in\scr{U}(A)$ by Lemma 3.2 in~\cite{rid2006}. Let $B^\prime=(g[\pi^{-1}_\kappa[B]])_A$. Also by Lemma 3.2 in~\cite{rid2006}, we have that 

\begin{align}
\es\neq \pi h_x\pi_\kappa^{-1}[B]\meet Y&=\pi h[g[\pi_\kappa[B]]]\meet Y\notag\\
&=\pi h\pi_A^{-1}[(g[\pi^{-1}_\kappa[B]])_A]\meet Y\notag\\
&=\pi h\pi_A^{-1}[B^\prime]\meet Y.\notag
\end{align}

Now, as noted at the end of the proof of Thm 3.1 in~\cite{rid2006}, $\scr{U}(A)=\Un_{n<\omega}\pi^{-1}_{A\to A_{n+1}}\scr{U}(A_{n+1})$. Thus there exists $n<\omega$ and $C\in\scr{U}(A_n)$ such that $B^\prime=\pi^{-1}_{A\to A_n}[C]$. Now, for that $n$,
\begin{align}
\pi h\pi^{-1}_{A_n}[C]\meet Y&=\pi h\pi^{-1}_A[\pi^{-1}_{A\to A_n}[C]]\meet Y\notag\\
&=\pi h\pi_A^{-1}[B^\prime]\meet Y\notag\\
&\neq\es.\notag
\end{align}
As $C\in\scr{U}(A_n)$ and $\pi h\pi^{-1}_{A_n}[C]\meet Y\neq\es$, we have $d(C)$ and $e(C)$ as defined in the second paragraph of this proof. Let $d=d(C)$ and $e^\prime=e(C)$. Then $d\in\pi h\pi^{-1}_{A_n}[C]\meet D$, $e^\prime\in\pi_{A_n}^{-1}[C]$,  $d=\pi h(e^\prime)$, and $h\pi^{-1}_{A_{n+1}}(e^\prime_{A_{n+1}})\sse\pi^{-1}(d)$. But $\pi_A^{-1}(e^\prime_A)\sse\pi^{-1}_{A_{n+1}}(e^\prime_{A_{n+1}})$ and thus $h\pi^{-1}_A(e^\prime_A)\sse h\pi^{-1}_{A_{n+1}}(e^\prime_{A_{n+1}})\sse\pi^{-1}(d)$. Furthermore, by Lemma 3.2 in~\cite{rid2006},
\begin{align}
d\in\pi h\pi^{-1}_{A_n}[C]\meet D&=\pi h\pi^{-1}_A[\pi^{-1}_{A\to A_n}[C]]\meet D=\pi h\pi^{-1}_A[B^\prime]\meet D\notag\\
&=\pi h\pi^{-1}_A[(g[\pi^{-1}_\kappa[B]])_A]=\pi hg[\pi^{-1}_\kappa[B]]\notag\\
&=\pi h_x\pi^{-1}_\kappa[B],\notag
\end{align} 
and 
\begin{align} 
e^\prime\in\pi_{A_n}^{-1}[C]&=\pi^{-1}_A[\pi^{-1}_{A\to A_n}[C]]=\pi^{-1}_A[B^\prime]\notag\\
&=\pi^{-1}_A[(g[\pi^{-1}_\kappa[B]])_A]=g[\pi_\kappa^{-1}[B]].\notag
\end{align}

Thus there exists $e\in\pi_\kappa^{-1}[B]$ such that $e^\prime=g(e)$. Thus, $d=\pi h(e^\prime)=\pi hg(e)=\pi h_x(e)$. Furthermore, as $\pi_A^{-1}(e^\prime_A)=g\pi_\kappa^{-1}(e_\kappa)$ (again by Lemma 3.2 in~\cite{rid2006}), we have that $h_x\pi^{-1}_\kappa(e_\kappa)=hg\pi_\kappa^{-1}(e_\kappa)=h\pi_A^{-1}(e^\prime_A)\sse\pi^{-1}(d)$. This establishes the Lemma.
\end{proof}

The next lemma is a strengthening of Theorem 3.4 in~\cite{rid2006}.

\begin{lemma}\label{phlemma2}
Let $X$ be a power homogeneous space. If $D\sse X$ and $U$ is an open set such that $U\sse clD$, then $|U|\leq |D|^{\pichar{X}}$.
\end{lemma}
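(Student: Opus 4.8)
The plan is to attach to each point of $U$ a ``code'' drawn from a set of size at most $|D|^{\pichar{X}}$ and to recover the point from its code, the engine being Lemma~\ref{phlemma1}. Put $\kappa=\pichar{X}$ and assume $|D|\geq 2$, since otherwise $U\sse clD$ forces $|U|\leq 1$. Fix a cardinal $\mu\geq\kappa$ for which $X^\mu$ is homogeneous, a projection $\pi:X^\mu\to X$, a point $p\in X$, and a local $\pi$-base $\scr{U}$ at $p$ with $|\scr{U}|\leq\kappa$ (available since $\pi\chi(p,X)\leq\pichar{X}=\kappa$). Applying Lemma~\ref{phlemma1} with this data and $Y=U$---legitimate because $U\sse clD$---yields for every $x\in U$ a homeomorphism $h_x:X^\mu\to X^\mu$ satisfying $h_x(\bar{p})=\bar{x}$ together with clause~(2).

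Next I would build the code. For $x\in U$ and $B\in\scr{U}(\kappa)$ with $\pi h_x\pi_\kappa^{-1}[B]\meet U\neq\es$, clause~(2) provides a point $d\in D$ and a witness $e\in\pi_\kappa^{-1}[B]$ with $\pi h_x(e)=d$ and, crucially, $h_x\pi_\kappa^{-1}(e_\kappa)\sse\pi^{-1}(d)$. Fixing one such choice, set $\phi_x(B)=d$; on the remaining $B\in\scr{U}(\kappa)$ leave $\phi_x(B)$ undefined. This produces a map $x\mapsto\phi_x$ from $U$ into the set of partial functions $\scr{U}(\kappa)\to D$, which, coded as total maps $\scr{U}(\kappa)\to D\un\{\ast\}$, number at most $|D|^{\pichar{X}}$. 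It therefore suffices to show that $x$ is determined by $\phi_x$.

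The recovery of $x$ from $\phi_x$ is the heart of the matter and the step I expect to fight hardest for. The difficulty is structural: because $\scr{U}(\kappa)$ only controls the first $\kappa$ coordinates of $X^\mu$, the sets $\pi h_x\pi_\kappa^{-1}[B]$ do \emph{not} form a local $\pi$-base at $x$ when $\mu>\kappa$, so the naive reading ``$x$ is the point the recorded values $d$ accumulate at'' is not available, and a single recorded $d\in D$ per $B$ cannot be replaced by the whole trace $\pi h_x\pi_\kappa^{-1}[B]\meet D$ without inflating the bound to $2^{|D|}$. This is exactly the role of the fibre condition $h_x\pi_\kappa^{-1}(e_\kappa)\sse\pi^{-1}(d)$ in clause~(2): it says $\pi h_x$ is \emph{constant}, with value the recorded $d$, on the entire fibre of $\pi_\kappa$ over $e_\kappa$, so that replacing the uncontrolled tail coordinates of $e$ by those of $\bar{p}$ leaves $d$ unchanged while pulling the witness back toward $\bar{p}$; this is what lets the $\kappa$-coordinate data reflect the global value $x=\pi h_x(\bar{p})$. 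I would make this precise by adapting the recovery argument in the proof of Theorem~3.4 of~\cite{rid2006}, systematically substituting the open set $U$ for the ambient space and the hypothesis $U\sse clD$ for density of $D$, checking that each appeal to density survives. Once $x\mapsto\phi_x$ is shown injective, $|U|\leq|D|^{\pichar{X}}$ follows.
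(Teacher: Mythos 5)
Your proposal is correct and follows essentially the same route as the paper's proof: the same application of Lemma~\ref{phlemma1} with $Y=U$, the same coding of the points of $\Delta(U,\mu)$ by their recorded values in $D$ over $\scr{U}(\kappa)$ (the paper uses a fixed default value $q\in D$ where you use partial functions), and injectivity via the fibre condition 2(b). The recovery argument you defer to is exactly what the paper executes---given $\bar{x}\neq\bar{y}$, separate $x$ and $y$ by disjoint open sets $V,W$, choose a single $B\in\scr{U}(\kappa)$ contained in $\pi_\kappa h_x^{-1}\pi^{-1}[V\meet U]\meet\pi_\kappa h_y^{-1}\pi^{-1}[W\meet U]$ (both are open and contain $p_\kappa$, since $h_x(\bar{p})=\bar{x}$ and $h_y(\bar{p})=\bar{y}$), and use 2(b) to conclude $\phi(x)(B)\in V\meet U$ while $\phi(y)(B)\in W\meet U$---and your worry about appeals to density surviving the substitution is moot, since every use of $U\sse clD$ is already encapsulated inside Lemma~\ref{phlemma1}.
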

\begin{proof} 
Fix $p\in X$, let $\kappa=\pichar{X}$, and let $\mu$ be a cardinal such that $X^\mu$ is homogeneous. If $\mu\leq\kappa$, then $X^\kappa$ is also homogeneous, so we can assume without loss of generality that $\mu\geq\kappa$. For all $\bar{x}\in\Delta(X,\mu)$ there exists a homeomorphism $h_x:X^\mu\to X^\mu$ satisfying (1) and (2) in Lemma~\ref{phlemma1}, where $Y=U$. Now $\scr{U}(\kappa)$ is a local $\pi$-base at $p_\kappa$ in $X^\kappa$ of size at most $\kappa$. Fix $q\in D$. 

We define $\phi:\Delta(U,\mu)\to D^{\scr{U}(\kappa)}$ as follows. Let $\bar{x}\in\Delta(U,\mu)$ and $B\in\scr{U}(\kappa)$. If $\pi h_x\pi^{-1}_\kappa[B]\meet U\neq\es$, set $\phi(x)(B)=\pi h_x(e)=d\in D$, where
$e$ and $d$ are as in 2(a) and 2(b) in Lemma~\ref{phlemma1}. Otherwise, define $\phi(x)(B)=q$. Thus $\phi$ is well-defined.

We show $\phi$ is one-to-one. Suppose $\bar{x}\neq\bar{y}\in\Delta(U,\mu)$. There exist disjoint open sets $V$ and $W$ in $X$ such that $x\in V$ and $y\in W$. So $x\in V\meet U$, $y\in W\meet U$, and
$$p_\kappa\in\pi_\kappa h_x^{-1}\pi^{-1}[V\meet U]\meet\pi_\kappa h_y^{-1}\pi^{-1}[W\meet U].$$
Note that the set on the right above is open in $X^\kappa$. There exists $B\in\scr{U}(\kappa)$ such that
$$B\sse\pi_\kappa h_x^{-1}\pi^{-1}[V\meet U]\meet\pi_\kappa h_y^{-1}\pi^{-1}[W\meet U],$$
from which it follows that $\pi h_x\pi_\kappa^{-1}[B]\meet U\neq\es$ and $\pi h_y\pi_\kappa^{-1}[B]\meet U\neq\es$. 

Thus $\phi(x)(B)=\pi h_x(e)=d$ for $e,d$ satisfying conditions 2(a) and 2(b) in Lemma~\ref{phlemma1}.  So $e\in\pi^{-1}_\kappa[B]$ and $e_\kappa\in B$. Since $B\sse\pi_\kappa h_x^{-1}\pi^{-1}[V\meet U]$, it follows that $\pi_\kappa^{-1}(e_\kappa)\meet h_x^{-1}\pi^{-1}[V\meet U]\neq\es$. Applying $h_x$ we have $h_x\pi_\kappa^{-1}(e_\kappa)\meet\pi^{-1}[V\meet U]\neq\es$. Since $h_x\pi_\kappa^{-1}(e_\kappa)\sse\pi^{-1}(d)$, it follows that $d\in V\meet U$ and $\phi(x)(B)\in V\meet U$. Likewise, a similar argument shows $\phi(x)(B)\in W\meet U$.

As $V\meet W\neq\es$, it follows that $\phi(x)(B)\neq\phi(y)(B)$ and therefore $\phi(x)\neq\phi(y)$. This shows $\pi$ is an injection. It follows that $|U|=|\Delta(U,\mu)|\leq |D|^{|\scr{U}(\kappa)|}\leq |D|^\kappa$.
\end{proof}

The following theorem represents a strengthening of Corollary 3.11 in~\cite{CR2012}.
\begin{theorem}\label{phtheorem}
If $X$ be a power homogeneous space and $U\sse X$ is a non-empty open set, then $|U|\leq 2^{L(\overline{U})t(X)\textup{pct}(X)}$.
\end{theorem}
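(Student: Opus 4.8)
The plan is to reduce the statement to a density bound on $\overline{U}$ and then apply Lemma~\ref{phlemma2}. Set $\kappa=L(\overline{U})t(X)\textup{pct}(X)$. Since $\pichar{X}\le t(X)\textup{pct}(X)\le\kappa$ (the generalization of \sapirovskii's inequality already used in Theorem~\ref{homogthm}), it suffices to produce $D\sse\overline{U}$ with $|D|\le 2^\kappa$ and $U\sse clD$: Lemma~\ref{phlemma2} then gives $|U|\le |D|^{\pichar{X}}\le (2^\kappa)^\kappa=2^\kappa$. Taking $D$ to be a dense subset of $\overline{U}$ of least size, density in $Y=\overline{U}$ means $\overline{U}\sse clD$, hence $U\sse clD$; so the theorem follows once I show $d(\overline{U})\le 2^\kappa$.

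To bound the density I would work inside the closed subspace $Y=\overline{U}$, which is Hausdorff and satisfies $L(Y)\le\kappa$, $t(Y)\le t(X)\le\kappa$, and $\textup{pct}(Y)\le\textup{pct}(X)\le\kappa$. The last inequality is the one point needing a short check: given a cover of $X$ by compact $K$ with $\chi(K,X)\le\textup{pct}(X)$, the traces $K\meet Y$ are compact, and because $Y$ is closed a neighborhood base of $K$ in $X$ restricts to one of $K\meet Y$ in $Y$ (enlarge each member by $X\minus Y$), so $\chi(K\meet Y,Y)\le\chi(K,X)$. Thus $\pichar{Y}\le t(Y)\textup{pct}(Y)\le\kappa$, and Corollary 2.9 in~\cite{avr2007} supplies a cover $\scr{C}$ of $Y$ by compact sets of character at most $\kappa$ in $Y$, each contained in $cl_Y A_C$ for some $A_C\in[Y]^{\le\kappa}$. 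For each $C\in\scr{C}$ fix a neighborhood base $\scr{U}_C$ in $Y$ with $|\scr{U}_C|\le\kappa$, and write $\scr{U}(\scr{C}^\prime)=\Un\{\scr{U}_C:C\in\scr{C}^\prime\}$.

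The core is a closing-off of length $\kappa^+$ modeled on the proof of Theorem~\ref{thm1}, with two changes: points are added directly (the witness points and the small sets $A_C$ play the role of the $\pi$-base), and the full Lindel\"of degree replaces $wL$. I would build an increasing chain $\{\scr{C}_\alpha:\alpha<\kappa^+\}$ in $\scr{C}$ with $|\scr{C}_\alpha|\le 2^\kappa$, set $E_\alpha=\Un\{A_C:C\in\scr{C}_\alpha\}$, and arrange that $\scr{C}_\alpha$ covers $cl_Y E_\alpha$ (from Lemma 3.5 in~\cite{CPR2012}, using $d(cl_Y E_\alpha)\le 2^\kappa$) together with the closing condition: if $\scr{V}\in[\scr{U}(\scr{C}_\alpha)]^{\le\kappa}$ and $Y\minus\Un\scr{V}\ne\es$, then $E_{\alpha+1}\meet(Y\minus\Un\scr{V})\ne\es$. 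To avoid circularity I would, at stage $\alpha+1$, first enlarge $E_\alpha$ by a witness point $z_{\scr{V}}\in Y\minus\Un\scr{V}$ for each relevant $\scr{V}$ and by $\Un\{A_C:C\in\scr{C}_\alpha\}$ — both determined at stage $\alpha$ — and only then apply Lemma 3.5 to get $\scr{C}_{\alpha+1}\supseteq\scr{C}_\alpha$ covering $cl_Y E_{\alpha+1}$; unions are taken at limits. Because each $A_C$ enters $E$ one stage after $C$ enters the chain, every $C\in\scr{C}^\prime:=\Un_\alpha\scr{C}_\alpha$ satisfies $C\sse cl_Y A_C\sse cl_Y E=:F$, while $\scr{C}^\prime$ still covers $F$; hence $\Un\scr{C}^\prime=F$, and $F=\Un_\alpha cl_Y E_\alpha$ is closed since $t(Y)\le\kappa$.

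It remains to show $Y=F$, giving $d(\overline{U})\le|E|\le 2^\kappa$. Assuming $x\in Y\minus F$, every $C\in\scr{C}^\prime$ lies in $F$ and so misses $x$; separating $x$ from the compact $C$ in the Hausdorff space $Y$ yields $U_C\in\scr{U}_C$ with $C\sse U_C$ and $x\notin U_C$, so $\scr{U}=\{U_C:C\in\scr{C}^\prime\}$ is an open cover of $F$. Here is the decisive step, and the main obstacle: since $F$ is closed in $Y$ with $L(F)\le L(Y)\le\kappa$, I may extract a genuine subcover $\scr{V}\in[\scr{U}]^{\le\kappa}$ with $F\sse\Un\scr{V}$ — not merely $F\sse cl(\Un\scr{V})$, which is all weak Lindel\"ofness would yield and which would not suffice. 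As $x\notin U_C$ for all $C$, we get $x\in Y\minus\Un\scr{V}$, and $\scr{V}\in[\scr{U}(\scr{C}_\alpha)]^{\le\kappa}$ for some $\alpha<\kappa^+$; the closing condition then placed a witness $z_{\scr{V}}\in Y\minus\Un\scr{V}$ into $E_{\alpha+1}\sse E\sse F\sse\Un\scr{V}$, a contradiction. The essential difficulties are thus the use of the full Lindel\"of degree to force genuine subcovers, confining the entire construction to $\overline{U}$ so that $L(\overline{U})$ is available, and adding witnesses and the sets $A_C$ simultaneously to secure both $\Un\scr{C}^\prime=F$ and the coverage of $cl_Y E_\alpha$ without circularity.
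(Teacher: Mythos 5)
Your reduction to Lemma~\ref{phlemma2} is the right move, and several of the supporting steps you check are correct: for closed $Y=\overline{U}$ the traces $K\meet Y$ of compact sets are compact with $\chi(K\meet Y,Y)\le\chi(K,X)$ (your ``enlarge by $X\minus Y$'' argument is fine), hence $\textup{pct}(Y)\le\textup{pct}(X)$, and your use of the full Lindel\"of degree of $\overline{U}$ to extract genuine subcovers in a closing-off of length $\kappa^+$ is sound in itself. The genuine gap is at the one point where homogeneity enters: you apply Corollary 2.9 of~\cite{avr2007} to the subspace $Y=\overline{U}$ to get a cover $\scr{C}$ of $Y$ by compact sets of character at most $\kappa$ with each $C\sse cl_Y A_C$, $A_C\in[Y]^{\le\kappa}$. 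That corollary is a statement about \emph{power homogeneous} spaces: it uses a homeomorphism of a power of the space to transport a single small compact $G_\kappa$-set into a cover of the whole space by such sets. But $Y$ is only a closed subspace of the power homogeneous space $X$; power homogeneity is not inherited by closed (or regular-closed) subspaces, and you give no argument that $Y$ possesses it. The sets $A_C$ obtained from that corollary are exactly what converts your conclusion $Y=\Un\scr{C}^\prime$ into the density bound $d(\overline{U})\le 2^\kappa$, so the gap is essential rather than cosmetic. The paper avoids it by applying the covering result (Proposition 3.2 of~\cite{Carlson2013}, a variant of Corollary 2.9 of~\cite{avr2007}) to $X$ itself, where power homogeneity is a hypothesis, and only afterwards passing to $K=\overline{U}$: it traces the cover $\scr{G}$ of $X$ onto $K$ and extracts a subfamily $\scr{G}^\prime$ of size at most $2^\kappa$ covering $K$ via Pytkeev's theorem on $G_\kappa$-sets, then sets $D=\Un\{H_G:G\in\scr{G}^\prime\}$.

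Your proof can be repaired along exactly those lines while keeping your one genuine departure from the paper (a Lindel\"of closing-off in place of the citation of Pytkeev's theorem). Apply Proposition 3.2/Corollary 2.9 to $X$ to get a cover $\scr{G}$ of $X$ by compact sets of character at most $\kappa$ with $G\sse cl(H_G)$ and $H_G\in[X]^{\leq\kappa}$; trace it onto $Y$ (by your own observation the traces are compact of character at most $\kappa$ in $Y$, hence regular $G_\kappa$-sets there, so Lemma 3.5 of~\cite{CPR2012} still applies inside $Y$); run your $\kappa^+$-induction using witness points only for the closing condition, concluding that $Y=\Un\{G\meet Y:G\in\scr{G}^\prime\}$ for some $\scr{G}^\prime\in[\scr{G}]^{\leq 2^\kappa}$; and finally put $D=\Un\{H_G:G\in\scr{G}^\prime\}$, so that $U\sse Y\sse cl(D)$ and $|D|\le 2^\kappa$. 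Note that Lemma~\ref{phlemma2} only requires $U\sse cl(D)$ for some $D\sse X$ of size at most $2^\kappa$, not $D\sse\overline{U}$; your insistence on producing a dense set lying \emph{inside} $\overline{U}$ is unnecessary, and it is precisely what forced you to apply the homogeneity machinery to $Y$ rather than to $X$.
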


\begin{proof}
Let $\kappa=L(\overline{U})t(X)\textup{pct}(X)$ and let $K=\overline{U}$. As $t(X)\textup{pct}(X)\leq\kappa$, by Lemma 3.8 in~\cite{CPR2012} there exists a compact set of character at most $\kappa$ contained in the closure of a set of size $\kappa$. Proposition 3.2 in \cite{Carlson2013}, which is a slight variation of Corollary 2.9 in \cite{avr2007}, guarantees there exists a family $\scr{G}$ of compact sets of character at most $\kappa$ and a family of subsets $\scr{H}=\{H_G:G\in\scr{G}\}\sse[X]^{\leq\kappa}$ such that a) $G\in cl(H_G)$ for all $G\in\scr{G}$, and b) $X=\Un\scr{G}$. Note that each $G\in\scr{G}$ is a $G_\kappa$-set in $X$. (Recall a $G_\kappa$-set is an intersection of $\kappa$-many open sets.)

Now, $K=\Un\{G\meet K:G\in\scr{G}\}$ and for all $G\in\scr{G}$, if $G\meet K\neq\es$ then $G\meet K$ is a $G_\kappa$-set in the subspace $K$. Therefore, as $K$ is compact, by Theorem 4 in Pytkeev~\cite{pyt85}, there exists $\scr{G}^\prime\sse\scr{G}$ such that $|\scr{G}^\prime|\leq 2^{t(K)\cdot\kappa}\leq 2^{t(X)\cdot\kappa}=2^\kappa$ and $K\sse\Un\scr{G}^\prime$. 

Let $D=\Un\{H_G:G\in\scr{G}^\prime\}$. Note that $|D|\leq |\scr{G}^\prime|\cdot\kappa\leq 2^\kappa\cdot\kappa=2^\kappa$. We have, 
$$U\sse K\sse\Un\scr{G}^\prime\sse\Un\{cl(H_G):G\in\scr{G}^\prime\}\sse cl\left(\Un\{H_G:G\in\scr{G}^\prime\}\right)=cl D.$$
By Lemma~\ref{phlemma2} and the fact that $\pichar{X}\leq t(X)\textup{pct}(X)$ for any space $X$, it follows that $|U|\leq |D|^{\pichar{X}}\leq (2^\kappa)^{\pichar{X}}\leq (2^\kappa)^\kappa=2^\kappa$.
\end{proof}

We now tie our results in with Corollary~\ref{homogcor1}.

\begin{corollary}\label{phcor1}
Let $X$ be a power homogeneous space that is either quasiregular or Urysohn. If $X$ has a $\pi$-base $\scr{B}$ such that $clB$ is Lindel\"of for all $B\in\scr{B}$ then $|X|\leq 2^{wL(X)t(X)\textup{pct}(X)}$.
\end{corollary}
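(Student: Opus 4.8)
The plan is to reduce this statement to Corollary~\ref{homogcor1} by verifying its cardinality hypothesis on the $\pi$-base, with the cardinality bound on individual $\pi$-base elements coming from Theorem~\ref{phtheorem}. Set $\kappa=wL(X)t(X)\textup{pct}(X)$. Since $X$ is power homogeneous and is quasiregular or Urysohn, Corollary~\ref{homogcor1} tells us that $|X|\leq 2^\kappa$ \emph{provided} we can show $|B|\leq 2^\kappa$ for every $B\in\scr{B}$. Thus the entire task is to establish this per-element bound.

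First I would fix $B\in\scr{B}$ and exploit the Lindel\"of hypothesis on its closure. Since $clB$ is Lindel\"of we have $L(\overline{B})=\aleph_0$. Now $B$ is a non-empty open subset of the power homogeneous space $X$, so Theorem~\ref{phtheorem} applies directly to $U=B$ and gives
$$|B|\leq 2^{L(\overline{B})\,t(X)\,\textup{pct}(X)}=2^{\aleph_0\cdot t(X)\textup{pct}(X)}.$$
The remaining point is the elementary cardinal inequality $\aleph_0\cdot t(X)\textup{pct}(X)\leq wL(X)t(X)\textup{pct}(X)=\kappa$, which holds because $wL(X)\geq\aleph_0$. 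Hence $|B|\leq 2^\kappa$, as required. I would use Theorem~\ref{phtheorem} here rather than a bare Arhangel'ski\u\i{}-type bound $|clB|\leq 2^{\chi(clB)}$ precisely because the latter would force me to control $\chi(clB)$, for which there is no hypothesis; Theorem~\ref{phtheorem} instead feeds the Lindel\"of property through the invariants $L$, $t$, and $\textup{pct}$, all of which are already absorbed into $\kappa$.

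With the uniform bound $|B|\leq 2^{wL(X)t(X)\textup{pct}(X)}$ now in hand for every $B\in\scr{B}$, the hypotheses of Corollary~\ref{homogcor1} are met, and that corollary yields $|X|\leq 2^{wL(X)t(X)\textup{pct}(X)}$, finishing the proof. I do not expect a genuine obstacle: the substantive content lives entirely in Theorem~\ref{phtheorem} (which provides the power-homogeneous cardinality bound for open sets in terms of $\textup{pct}$) and in Corollary~\ref{homogcor1} (which upgrades a $\pi$-base estimate to a global one under the quasiregular/Urysohn assumption). The only care needed is the bookkeeping step of confirming that the Lindel\"of closures make $L(\overline{B})$ negligible against $wL(X)$ inside the exponent, which is what allows the two prior results to be chained together.
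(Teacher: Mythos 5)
Your proposal is correct and matches the paper's own proof: the paper likewise applies Theorem~\ref{phtheorem} to each $B\in\scr{B}$ (using that $clB$ Lindel\"of makes $L(\overline{B})=\aleph_0$, so $|B|\leq 2^{t(X)\textup{pct}(X)}\leq 2^{\kappa}$) and then invokes Corollary~\ref{homogcor1}. Your write-up merely spells out the bookkeeping that the paper leaves implicit.
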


\begin{proof}
Let $\kappa=wL(X)t(X)\textup{pct}(X)$. By Theorem~\ref{phtheorem} it follows that $|B|\leq 2^{t(X)\textup{pct}(X)}\leq 2^\kappa$ for all $B\in\scr{B}$. By Corollary~\ref{homogcor1}, $|X|\leq 2^\kappa$.
\end{proof}

\begin{corollary}\label{lcpowerhomog}
Let $X$ be a locally compact power homogeneous space. Then $|X|\leq 2^{wL(X)t(X)}$. 
\end{corollary}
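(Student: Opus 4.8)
The plan is to recognize that a locally compact space satisfies all the hypotheses of Corollary~\ref{phcor1}, and that the $\textup{pct}(X)$ term in the resulting bound collapses to $\aleph_0$. First I would record the standard facts about local compactness: since $X$ is locally compact and Hausdorff it is Tychonoff, hence in particular both quasiregular and Urysohn, so the separation hypothesis of Corollary~\ref{phcor1} is met. Moreover, a standard consequence of local compactness together with regularity is that every nonempty open set contains a nonempty open set with compact closure; thus the family $\scr{B}$ of all open sets with compact closure is an open $\pi$-base (indeed a base) for $X$.

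Next I would observe that for each $B\in\scr{B}$ the closure $clB$ is compact and therefore Lindel\"of. Consequently $X$ is a power homogeneous, quasiregular (or Urysohn) space possessing a $\pi$-base whose elements have Lindel\"of closure, which is precisely the hypothesis of Corollary~\ref{phcor1}. Applying that corollary yields $|X|\leq 2^{wL(X)t(X)\textup{pct}(X)}$.

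Finally, I would simplify the exponent. Because $X$ is locally compact it is of pointwise countable type; that is, it is covered by compact sets $K$ with $\chi(K,X)=\aleph_0$ (see \cite{Engelking}), whence $\textup{pct}(X)=\aleph_0$. Since cardinal functions are bounded below by $\aleph_0$, we have $\textup{pct}(X)=\aleph_0\leq wL(X)t(X)$, so that $wL(X)t(X)\textup{pct}(X)=wL(X)t(X)$ and the bound reduces to $|X|\leq 2^{wL(X)t(X)}$.

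I expect no serious obstacle here: the substantive content—Lemma~\ref{phlemma2}, Theorem~\ref{phtheorem}, and the closing-off argument of Theorem~\ref{thm1} underlying Corollary~\ref{homogcor1}—has already been established, so this corollary amounts to checking that the geometric features of local compactness feed exactly into the hypotheses of Corollary~\ref{phcor1}. The only point requiring a moment's care is the collapse $\textup{pct}(X)=\aleph_0$, which rests on the fact that a locally compact Hausdorff space is of pointwise countable type.
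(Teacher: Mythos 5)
Your proof is correct and follows essentially the same route as the paper: both apply Corollary~\ref{phcor1} to the base of open sets with compact (hence Lindel\"of) closure and then collapse the exponent using $\textup{pct}(X)=\aleph_0$ for spaces of pointwise countable type. Your write-up is only slightly more explicit than the paper's, e.g.\ in verifying the quasiregular/Urysohn hypothesis via the Tychonoff property of locally compact Hausdorff spaces.
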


\begin{proof}
Note that $X$ has a $\pi$-base (in fact, a base) $\scr{B}$ of open sets such that $clB$ is compact for all $B\in\scr{B}$. Also $X$ is of pointwise countable type, i.e. $\textup{pct}(X)=\aleph_0$. By Corollary~\ref{phcor1}, $|X|\leq 2^{wL(X)t(X)\textup{pct}(X)}=2^{wL(X)t(X)}$.
\end{proof}

We conclude with a question analogous to Question~\ref{q1}.

\begin{question}
If $X$ is power homogeneous and Tychonoff, is $|X|\leq 2^{wL(X)t(X)\textup{pct}(X)}$?
\end{question}

\end{document}